\newtheorem{theorem}{Theorem}[section]
\newtheorem{lemma}[theorem]{Lemma}
\newtheorem{proposition}[theorem]{Proposition}
\newenvironment{proof}[1][Proof]{\begin{trivlist}
\item[\hskip \labelsep {\bfseries #1}]}{\end{trivlist}}
\begin{document}

\title{Real-time Mode Scheduling Using Single-Integration Hybrid Optimization for Linear Time-Varying Systems}

\title{Real-time Dynamic-Mode Scheduling Using Single-Integration Hybrid Optimization for Linear Time-Varying Systems}

\author{Anastasia~Mavrommati,~\IEEEmembership{Student Member,~IEEE,}
        Jarvis~A.~Schultz,~\IEEEmembership{Member,~IEEE,}
        and~Todd~D.~Murphey,~\IEEEmembership{Member,~IEEE}% <-this % stops a space
\thanks{Anastasia~Mavrommati, Jarvis~A.~Schultz and  Todd~D.~Murphey are with the Department of Mechanical Engineering, Northwestern University, 2145 Sheridan Road Evanston, IL 60208, USA 
        {\tt\small Email: stacymav@u.northwestern.edu; jschultz@northwestern.edu; t-murphey@northwestern.edu}}}% <-this % stops a space
%\thanks{Manuscript received April 19, 2005; revised August 26, 2015.}}     

% The paper headers
%\markboth{Journal of \LaTeX\ Class Files,~Vol.~14, No.~8, August~2015}%
%{Shell \MakeLowercase{\textit{et al.}}: Bare Demo of IEEEtran.cls for IEEE Journals}   
% make the title area
\maketitle

\begin{abstract}

This paper considers the problem of real-time mode scheduling in linear time-varying switched systems subject to a quadratic cost functional. The execution time of hybrid control algorithms is often prohibitive for real-time applications and typically may only be reduced at the expense of approximation accuracy.
We address this trade-off by taking advantage of system linearity to formulate a projection-based approach so that no simulation is required during open-loop optimization. A numerical example shows how the proposed open-loop algorithm outperforms methods employing common numerical integration techniques. 
Additionally, we follow a receding-horizon scheme to apply real-time closed-loop hybrid control to a customized experimental setup, using the Robot Operating System (ROS).  
In particular, we demonstrate---both in Monte-Carlo simulation and in experiment---that optimal hybrid control efficiently regulates a cart and suspended mass system in real time. 
 
% ++ NOTE TO PRACTITIONERS

\end{abstract}

\begin{IEEEkeywords}
switching controllers, real-time experimental validation, receding-horizon control, optimal control
\end{IEEEkeywords}

\section{INTRODUCTION}
This paper considers the problem of mode scheduling---selection of both sequence and timing of modes---for an autonomous linear time-varying switched system to optimize a quadratic performance metric. The contribution of this paper is two-fold: 
1) The formulation of an open-loop mode scheduling algorithm (referred to as Single Integration Optimal Mode Scheduling---SIOMS) so that no differential equation needs to be solved for during optimization; 
2) The formulation and experimental implementation of a receding-horizon mode scheduling algorithm for real-time closed-loop hybrid control so that a differential equation only needs to be integrated over a limited time interval $\delta$---typically the time step of the receding-horizon window---rather than the time horizon $T$.

% Applications 
% % % % % % % % % % % % % % % % % % % %
Optimal scheduling problems arise in a number
of application domains, such as mobile robotics \cite{boccadoro2005optimal,caldwell2011switching}, hybrid automotive control \cite{lygeros1998verified,uthaichana2011hybrid,meyer2011hybrid}, power electronics \cite{mariethoz2010comparison}, telecommunications \cite{hristu2001feedback,bhattacharya2011switching} and air traffic management \cite{sastry1995hybrid, GNC11}. 
% Diverse mode scheduling algorithms
Several algorithmic (theoretical and practical) methods have been proposed to deal with these problems.  Mode scheduling is challenging due to the fact that both the mode sequence and the set of switching times are optimized jointly. As a result, many approaches have relied on a bi-level hierarchical structure with only a subset of the design variables considered at each level \cite{axelsson2008gradient,egerstedt2006transition,gonzalez2010numerical,gonzalez2010descent}. Other proposed methods include: embedding methods \cite{bengea2005optimal,wei2007applications}, which relax, or embed, the integer constraint and find the optimal of the relaxed cost;
relaxed dynamic programming \cite{rantzer2006relaxed,gorges2011optimal} where complexity is reduced by relaxing optimality within pre-specified bounds;  and variants of gradient-descent methods \cite{wardi2014switched,wardi2012algorithm,passenberg2010minimum}.

The iterative projection-based approach as introduced in \cite{838,839,840}  forms the basis for the work in this paper. The mode scheduling problem is formulated as an infinite-dimensional optimal control problem where the variables to be optimized are a set of functions of time constrained to the integers. For a projection-based method, the design variables are in an unconstrained space but the cost is computed on the projection of the design variables to the set of admissible switched system  trajectories. In \cite{838},  an iterative optimization algorithm is synthesized that employs the Pontryagin Maximum Principle and a projection-based technique. 
We adapt this algorithm by taking advantage of the linearity of the dynamical system under concern. The specific case of linear switching control has been extensively investigated by others (see \cite{sun2005analysis,lincoln2001optimizing,seatzu2006optimal,zhang2008optimal,kouhi2013suboptimality,SCL10,HSCC08, egerstedt2000toward}).  
The approaches in \cite{giua2001optimal,xu2001approach} solve for a differential equation at each step of the iterative algorithm. Previous attempts to avoid the on-line integration of differential equations are limited to switching time optimization problems where the mode sequence is fixed \cite{giua2001optimal,xu2001approach,xu2002optimal,caldwell2012single}.

In this paper, we extend our work in \cite{mavrommati2014single} to present and experimentally evaluate  a projection-based mode scheduling algorithm (SIOMS) where  a single set of differential equations is solved off-line, so that no additional simulation is required during the open-loop optimization routine. \textit{These off-line solutions to differential equations are independent of the mode sequence and switching times} in contrast to \cite{giua2001optimal,xu2001approach,xu2002optimal,caldwell2012single}. Moreover, no assumption about the time-variance of the modes is made. Therefore, SIOMS does not exclude many important linear systems, such as time-varying power systems \cite{goldsmith1997variable,neely2006energy} and nonlinear systems linearized about a trajectory.

One of the strongest assets of the proposed algorithm is that its \textit{timing behavior--- i.e., the execution time of a single iteration of the optimization algorithm--- is independent of any choice of ODE solver}, and only depends on the number of multiplications and inversions required for the calculation of the optimality condition. That is, although the execution times of iterative hybrid control methods are often prohibitive for real-time implementation  \cite{vasudevan2013consistent1,vasudevan2013consistent2}, \textit{SIOMS is fast} and intrinsically free of the common trade-off between fast execution and low approximation errors that normally determines the selection of the appropriate numerical integration technique.

Furthermore, by avoiding online simulations, our proposed formulation provides a solution for implementation of hybrid control algorithms in real-time applications. In particular, the numerical implementation of switched system algorithms that require the exact solution of differential equations may be impractical for two reasons. First, the solution approximation through discretization does not always guarantee consistency \cite{polakoptimization, sanfelice2010dynamical}; second, discontinuous differential equations require specialized event-based numerical techniques that are prone to approximation errors \cite{brogliato2008numerical}. Authors in \cite{vasudevan2013consistent2} address the first issue and propose a method to apply a discretization that guarantees consistency, considering nonlinear systems. In this paper, by restricting our focus to linear time-varying systems, we introduce a method where \textit{approximation accuracy and consistency are independent of the number of samples} used for approximation of the state and co-state trajectories. 
To address the second issue, SIOMS only requires off-line integration of differential equations that are continuous and as smooth as each of the linear modes. Thus, our algorithm exhibits \textit{robustness to numerical errors due to discontinuous vector fields}.

As a result of the aforementioned computational and timing advantages, SIOMS is particularly suited for closed-loop infinite horizon control by means of a receding-horizon synthesis \cite{mayne2014model}. Importantly, in receding-horizon optimization using SIOMS, a simple update step removes the need for numerical integration over the full time horizon $T$ in between consecutive algorithm runs---only integration over a few time steps $dt$ is required. Benefits of this paper's method include improved robustness to model uncertainties and disturbance rejection capabilities as demonstrated by the experimental results and a Monte-Carlo analysis. In particular,  we use the Robot Operating System (ROS) to apply real-time hybrid control to a mobile robot and suspended mass system. Our numerical and experimental work demonstrates that closed-loop SIOMS  regulates the example system reliably in real time.

This paper is structured as follows: Section \ref{review} reviews switched systems and their representations while stating the optimization problem. The single-integration mode scheduling algorithm is proposed in Section \ref{algor} where a receding-horizon approach for closed-loop control is also introduced. Details about the numerical implementation of open-loop SIOMS are provided in Section \ref{open}, along with a comparison with former implementations \cite{838, 839}. Finally, Section~\ref{closed} illustrates the efficacy of closed-loop SIOMS through a Monte-Carlo analysis and a real-time experiment.

\section{ Review}
\label{review}
\subsection{Switched Systems}
\label{def}

Switched systems are a class of hybrid systems \cite{goebel2009hybrid, lygeros2005overview} that evolve according to one of $N$ vector fields (modes) $f_i: \mathbb{R}^n \rightarrow \mathbb{R}, i \in \{1,...,N\}$ at any time over the finite time interval $[T_0,T_M]$, where $T_0$ is the initial time and $T_M>0$ is the final time. We consider two representations of the switched system, namely mode schedule and switching control. As a unique mapping exists between each representation \cite{838}, the two will be used interchangeably throughout the paper. 

\textbf{Definition 1} The \textit{mode schedule}  is defined as the pair $\{\varSigma,\mathcal{T}\}$ where $\varSigma=\{\sigma_1,...,\sigma_M\}$ is the sequence of active modes $\sigma_i \in \{1,...,N\}$ and $\mathcal{T}=\{T_1,...,T_{M-1}\}$ is the set of the switching times $T_i\in[T_0,T_M]$. The total number of modes in the mode sequence---which may vary across optimization iterations---is $M\in\mathbb{Z}^+$.

\textbf{Definition 2} A \textit{switching control}  corresponds to a list of curves $u=[u_1,...,u_N]^T$ composed of $N$ piecewise constant functions of time, one for each different mode $f_i$. For all $t\in[T_0,T_M], \sum\nolimits_{i=1}^N u_i(t)=1$, and for all $i\in\{1,...,N\}, u_i(t)\in\{0,1\}$. This dictates that the state evolves according to only one mode for all time. We represent the set of all admissible switching controls as $\Omega$. 

We will refer to the mode schedule corresponding to the switching control $u$ as $\{\varSigma(u),\mathcal{T}(u)\}$.

For a system with $n$ states $x=[x_1,...,x_n]^T$ and $N$ different modes, the state equations are given by
\begin{equation}
\label{eq15}
\dot{x}(t)=F(t,x(t),u(t)):= \sum\limits_{i=1}^{N} u_i(t) f_i(x(t),t) 
\end{equation}
subject to the initial condition $x(T_0)=x_0$.
For this paper, we restrict our focus to linear time-varying systems so that 
\begin{equation}
\label{eq17}
F(t,x(t),u(t)):= \sum\limits_{i=1}^{N} u_i(t) A_i(t) x(t).
\end{equation}
Alternatively, we may express the system dynamics with respect to the current mode schedule as follows: 
\begin{equation}
\label{eq16}
F(t,x(t),\varSigma,\mathcal{T}):= \overline{A}(t,\varSigma,\mathcal{T}) x(t)
\end{equation}
where $\overline{A}(t,\varSigma,\mathcal{T})=A_{\sigma_i}(t) $ for $ T_{i-1}\leq t<T_i$.

\subsection{Problem Statement} 
\label{problem}
Our objective is the minimization of a quadratic cost function 
\begin{equation}
\label{cost}
J(x,u)=  \int_{T_0}^{T_M} \frac{1}{2} x(\tau)^T Q(\tau) x(\tau)d\tau + \frac{1}{2} x(T_M)^T P_1 x(T_M)
\end{equation}
subject to the pair $(x,u)$ where $x$ is the state and $u$ the switching control. Here, $Q$ and $P_1$ are the running and terminal cost respectively, and are both symmetric positive semi-definite. Note that this cost functional can also be adapted to include reference trajectory, in which case the objective would be to minimize the error between the state and the reference (\cite{caldwell2012single}). Trajectories $(x,u)$ that optimize the performance metric (\ref{cost}) are constrained by the state equations \eqref{eq17}.  

\subsection{Projection-based Optimization} 
From Definition 2 of an admissible switching control $u$, it follows that our optimization problem is subject to an integer constraint \cite{838}. Let $\mathcal{S}$ represent the set of all pairs of admissible state and switching control trajectories $(x,u)$, i.e.\ all pairs that satisfy the constraint (\ref{eq15}) and are consistent with Definition 2 so that $u\in \Omega$. In \cite{839}, the authors propose a projection-based technique for handling these constraints set by $\mathcal{S}$. In particular, an equivalent problem is considered where the design variables $(\alpha,\mu)$ belong to an unconstrained set $(\mathcal{X},\mathcal{U})$ and the cost $J$ is evaluated on the projection of these variables to the set $\mathcal{S}$. 
Now, the problem is reformulated as
\begin{equation}
\label{costproj}
\arg \min\limits_{(\alpha,\mu)}J(\mathcal{P}(\alpha,\mu))
\end{equation}
where $\mathcal{P}$ is a projection - with  $\mathcal{P}(\mathcal{P}(\alpha,\mu))  = (\mathcal{P}(\alpha,\mu))$ - that maps curves from the unconstrained set $(\mathcal{X},\mathcal{U})$ to the set of admissible switched systems $\mathcal{S}$. As the cost is calculated on the admissible projected trajectories, this problem is equivalent to the original problem described in  \ref{problem} and \eqref{cost}.    

The optimal mode scheduling algorithm developed in \cite{838} utilizes the max-projection operator. The max-projection operator $\mathcal{P}: \mathcal{X}\times \mathcal{U}  \rightarrow \mathcal{S} $ at time $t\in [T_0,T_M]$ is defined as 
\begin{equation}
\label{proj}
\mathcal{P}(\alpha(t),\mu(t)):= 
\begin{cases}
&\dot{x}(t)=F(t,x(t),u(t)), \;\;\;\;\;\;\; x(T_0)=x_0\\
& u(t) = \mathcal{Q}(\mu (t))
\end{cases}
\end{equation}
where $\mathcal{Q}$ is a mapping from a list of $N$ real-valued control trajectories, $\mu(\cdot)=$ $[\mu_1(\cdot),...,\mu_N(\cdot)]^T$ $\in \mathbb{R}^N$ to a list of $N$ feasible switching controls, $u\in \Omega$. We define $\mathcal{Q}$ as
\begin{equation}
\label{pro}
\mathcal{Q}(\mu (t)) = \begin{bmatrix}
\mathcal{Q}_1(\mu (t)) \\
\vdots\\
\mathcal{Q}_N(\mu (t)) 
\end{bmatrix}\;\; \text{with} \;\;\; 
\mathcal{Q}_i(\mu (t)):= \prod\limits_{j\neq i }^{N} 1(\mu_i (t)- \mu_j (t))
\end{equation} 
where $1:\mathbb{R}\rightarrow \{0,1\}$ is the step function given by
\begin{equation}
\label{step}
1(t) = 
 \begin{cases}
1, & t\geq0 \\
0, & else. 
\end{cases}
\end{equation} 
Notice that the max-projection operator does not depend on the unconstrained state trajectories $\alpha(\cdot)$. The unconstrained state $\alpha$ is included in the left hand side of the
definition in order for $\mathcal{P}$ to be a projection. 
 
\subsection{Mode Insertion Gradient}
\label{grad}
The \textit{mode insertion gradient} appears in previous studies \cite{egerstedt2006transition,gonzalez2010descent,wardi2012algorithm}. Here, it is defined as the list of functions $d = [d_1(t),...,d_N(t)] \in \mathbb{R}^N$ that calculate the sensitivity of cost $J$ to inserting one of the $N$ modes at some time $t$ for an infinitesimal interval (i.e. $\frac{dJ}{d\lambda^+}$ as $\lambda^+\rightarrow 0$). Each element of $d$ is given by:   
\begin{equation}
\label{mode}
d_i(t): = \rho(t)^T (f_i(x(t),t) - f_{\sigma(t)}(x(t),t)) , \;\; i=1,...,N
\end{equation}
where $x\in \mathbb{R}^n$ is the solution to the state equations (\ref{eq15}) for all $t\in[T_0,T_M]$ and  $\rho \in \mathbb{R}^n$, the co-state, is the solution to the adjoint equation 
\begin{equation}
\label{rhoeq}
\dot{\rho}(t)= - \mathcal{D}_x F(t,x(t),u(t))^T \rho(t)-Q(t)x(t), 
\end{equation}  
for all $t\in[T_0,T_M]$ subject to $\rho(T_M)=P_1x(T_M)$.
(In (\ref{mode}), $\sigma(t): [T_0,T_M]\rightarrow \{1,...,N\}$ is the function that returns the active mode at any time $t$.)  

It has been shown in \cite{841},  that when a quadratic cost is optimized subject to a linear time-varying switched system, a linear mapping between state $x$ and co-state $\rho$ exists. Thus, we may express the co-state as 
\begin{equation}
\label{rhoeq1}
\rho(t)=P(t) x(t)
\end{equation} 
where $P(t)\in \mathbb{R}^{n\times n}$ is calculated by the following differential equation:
\begin{equation}
\label{Rhoeq}
\dot{P}(t)= -\overline{A}(t,\varSigma,\mathcal{T})^TP(t)-P(t)\overline{A}(t,\varSigma,\mathcal{T})-Q(t) 
\end{equation} 
subject to $P(T_M)=P_1$.
Note that this is the linear switched system analog to the Riccati equation from the LQR problem in classical control theory  \cite{anderson2007optimal}. 
Using (\ref{eq17}) and (\ref{rhoeq1}), the mode insertion gradient element can be written as
\begin{equation}
\label{modenew}
d_i(t): = x(t)^T P(t)^T [A_{i}(t) - A_{\sigma (t)}(t)]x(t).
\end{equation}

\subsection{Iterative Optimization}

To calculate the switching control $u(t)$ that optimizes the quadratic performance metric \eqref{cost}, we follow an iterative approach. Iterative optimization computes a new estimate of the optimum by taking a step from the current estimate in a search direction so that a sufficient decrease in cost is achieved \cite{axelsson2008gradient, wardi2014switched, gonzalez2010descent,838}.
A single iteration is commonly structured in the following scheme: Given a current estimate of the optimum, i) Calculate a descent direction; ii) Calculate a step size; iii) Update the current estimate by taking a step in the descent direction. The iterative procedure is repeated until a terminating condition is satisfied.
The descent must be sufficient so that the sequence generated by the iterative optimization algorithm converges to a stationarity point---or at least sufficient for an optimality function to go to zero \cite{wright1999numerical,armijo1966minimization,840,wardi2014switched,axelsson2008gradient}. 

In the following section, we formulate an iterative projection-based algorithm for quadratic optimization of linear time-varying switched systems that requires no online simulations.

\section{Single Integration Optimal Mode Scheduling}
\label{algor}

\begin{algorithm}
	\footnotesize{
     %\centering
     %\framebox{\parbox{3.3in}{
     \caption{ SIOMS}
     \label{sioms} 
     \hrule
     
     \vspace{2 mm}
     \textit{Off-line:}
      
      \begin{itemize}
        \item Solve for the STM $\Phi^j(t,T_0)$ and ATM $\Psi^j(t,T_M)$
                          $ \forall j \in \{1,...,N\} $ and $t\in[T_0,T_M]$.
                      \item Choose initial $u^0\rightarrow \{\varSigma(u^0),\mathcal{T}(u^0)\}$.
                      \item Set $x(T_0)=x_0$ and $P(T_M)=P_1$.
        
      \end{itemize}
      
      \hrule
      \vspace{2 mm}
      \textit{On-line iterative process:}
      
      Set $k=0$, $u^k=u^0$.  
      \begin{enumerate}
              \item Evaluate $x^k(t):=\chi(t,\varSigma(u^k),\mathcal{T}(u^k))$ as in Eq. (\ref{eq2}). 
              \item Evaluate $P^k(t):=\varrho(t,\varSigma(u^k),\mathcal{T}(u^k))$ as in Eq. (\ref{rho}).
              \item Evaluate the descent direction $-d^k(t)$ as in Eq. (\ref{modedef}).
              \item Calculate step size $\gamma^k$ by backtracking.
              \item Update: $u^{k+1}(t)=Q(u^k(t)-\gamma^k d^k(t)) $.
              \item If  $u^{k+1}$ satisfies a terminating condition, then exit, else, increment $k$ and repeat from step 1. 
              
      \end{enumerate}
      }
      % }}
\end{algorithm}

\subsection{ Open Loop Control over Finite Time Horizon } 
\label{algor1} 
The problem of optimizing an arbitrary cost functional $J(x,u)$ subject to the switching control $u(t)$ and switching system state $x(t)$ is considered in  \cite{838}. Here, we extend  \cite{838} by restricting to linear time-varying systems with quadratic performance metric. In particular, we  reformulate this problem so that no differential equations are solved during the iterative optimization routine. Algorithm \ref{sioms} provides a summary of SIOMS.

Consider the optimization problem constrained by the system dynamics \eqref{eq16}, as described in section \ref{review}. The dynamic constraint dictates that a system simulation should be performed at each iteration in Algorithm \ref{sioms} as soon as the next switching control has been calculated. In particular, the calculation of the mode insertion gradient (\ref{mode}) involves the solution of the state and adjoint equations, (\ref{eq16}) and (\ref{rhoeq}), while the max-projection operator also includes the state equation \eqref{eq16}.  

We follow a similar approach to the switching time optimization approach in \cite{caldwell2012single}, extending it to the situation where the mode sequence $\varSigma$ is unknown. Building on the existence of a linear relationship between the state and co-state as described in section \ref{grad}, we  utilize operators to formulate algebraic expressions for the calculation of the state $x(t)$ and the relation $P(t)$ at any time $t\in[T_0,T_M]$. The operators are available prior to optimization through off-line solutions to differential equations. Moreover, they are independent of the mode sequence and switching times. 

In the switching time optimization case \cite{caldwell2012single}---where  the mode sequence is constant and the problem is finite-dimensional---a single optimization iteration involves only a finite number of state and co-state evaluations; these occur at the (finite) switching times for that particular iteration. However, mode scheduling is an infinite-dimensional optimal control problem and requires  the time evolution of the state and co-state trajectories at each iteration.
Therefore, in order for the proposed algorithm to be feasible, an explicit mapping from time $t$ to $x$ and $P$ is needed at each iteration, depending on the current mode schedule $\{\varSigma,\mathcal{T}\}$. The mapping, below in (\ref{eq2}) and (\ref{rho}), only includes algebraic expressions dependent on solutions to pre-computed differential equations. The exact number of multiplications executed in each iteration depends on how many time instances the state and co-state must be evaluated. 
 
For the rest of the paper, a variable with the superscript $k$ implies that the variable depends directly on $u^k$ i.e.\ the switching control at the $k$\textsuperscript{th} algorithm iteration.

\paragraph{Evaluating $x(t)$}
The operators for evaluating $x(t)$ are the state-transition matrices (STM)  of the $N$ different modes. Let $\Phi^j(\cdotp,T_0): \mathbb{R}  \rightarrow \mathbb{R}^{n\times n} $ denote the STM for the linear mode $j\in\{1,...,N\}$ with $A_j(t)$. 
The STM are the solutions to the $N$ matrix differential equations 
\begin{equation}
\label{eq55}
\frac{d}{dt}\Phi^j(t,T_0)=A_j(t) \cdotp\Phi^j(t,T_0),\;\; j=1,...,N 
\end{equation}
subject to the initial condition $ \Phi^j(T_0,T_0)=I_n$.
The following two STM properties are useful for computing the state $x(t)$ given a mode schedule $\{\varSigma,\mathcal{T}\}$. For an arbitrary STM, $\Phi$, characterized by $A(t) $, we have \cite{hespanha2009linear} :
  \begin{enumerate}
              \item $x(t)=\Phi(t,\tau) x(\tau)  $
              \item $\Phi(t_1,t_3)= \Phi(t_1,t_2)\Phi(t_2,t_3)= \Phi(t_1,t_2)\Phi(t_3,t_2)^{-1}. $
              
  \end{enumerate}
We emphasize the importance of Property 2 in that it allows us to use a single operator for the evaluation of the state as explained in the following.

\begin{proposition}
 The state $x(t)$ at all $t\in[T_0,T_M]$ depends on the mode schedule $\{\varSigma,\mathcal{T}\}$ and the STM $\Phi^j(\cdotp,T_0)$ and is given by $x(t):=\chi(t,\varSigma,\mathcal{T})$ where  
\begin{equation}
\label{eq2}
\begin{split}
 \chi(t,\varSigma,\mathcal{T})= \sum_{i=1}^{M} \bigg\{ &\Big[1(t-T_{i-1})-1(t-T_i)\Big]\\&\Phi^{\sigma_i}(t,T_0)\Phi^{\sigma_i}(T_{i-1},T_0)^{-1}x(T_{i-1})\bigg\}\\
& \mbox{subject to } x(T_0)=x_0,
\end{split}
\end{equation}
$1(\cdot)$ is the step function defined in \eqref{step} and  $T_{i}$ , $\sigma_i$ are the $i^{th}$ switching time and corresponding active mode as defined in Section \ref{def}. 
\end{proposition}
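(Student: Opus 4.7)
The plan is to prove the formula by exploiting the piecewise-linear structure of the switched dynamics: on each subinterval $[T_{i-1}, T_i)$ the system is a single time-varying linear ODE governed by $A_{\sigma_i}(t)$, so its state-transition matrix already furnishes the solution; the sum in \eqref{eq2} is just a bookkeeping device that selects the correct branch via the indicator $1(t-T_{i-1})-1(t-T_i)$ and then rewrites the branch's STM using Property 2.

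I would proceed by fixing an arbitrary $t \in [T_0, T_M]$ and letting $i^{\ast}$ be the unique index with $T_{i^{\ast}-1} \le t < T_{i^{\ast}}$. Step one: verify that, by the definition \eqref{step} of $1(\cdot)$, the bracket $1(t-T_{i-1})-1(t-T_i)$ equals $1$ when $i = i^{\ast}$ and $0$ otherwise, so the sum in \eqref{eq2} collapses to its $i^{\ast}$-th term. Step two: observe that on $[T_{i^{\ast}-1}, T_{i^{\ast}})$ the dynamics \eqref{eq16} reduce to $\dot{x}(t) = A_{\sigma_{i^{\ast}}}(t)\,x(t)$, so Property~1 of the STM gives $x(t) = \Phi^{\sigma_{i^{\ast}}}(t, T_{i^{\ast}-1})\,x(T_{i^{\ast}-1})$. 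Step three: apply Property~2 with $t_1 = t$, $t_2 = T_0$, $t_3 = T_{i^{\ast}-1}$ to rewrite
\[
\Phi^{\sigma_{i^{\ast}}}(t, T_{i^{\ast}-1}) = \Phi^{\sigma_{i^{\ast}}}(t, T_0)\,\Phi^{\sigma_{i^{\ast}}}(T_{i^{\ast}-1}, T_0)^{-1},
\]
which is exactly the factor appearing in the surviving term of \eqref{eq2}.

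Step four: justify that $x(T_{i^{\ast}-1})$ in the formula is itself consistently defined. I would do this by induction on $i^{\ast}$: the base case $i^{\ast} = 1$ is the initial condition $x(T_0) = x_0$; for the inductive step, the value $x(T_{i-1})$ is produced by evaluating the same formula at the right endpoint of the previous interval (continuity of each mode's trajectory ensures the one-sided limit matches), and the previous-interval case is handled by the induction hypothesis. This closes the recursion and shows that \eqref{eq2} reproduces the solution of \eqref{eq16} on every subinterval, hence on all of $[T_0, T_M]$.

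I do not expect any of these steps to be difficult in the deep sense; the only minor technical point is the treatment of the switching instants themselves. Because $1(\cdot)$ is right-continuous with $1(0) = 1$, the indicator $1(t-T_{i-1})-1(t-T_i)$ correctly assigns $t = T_{i-1}$ to the $i$-th interval, which is consistent with the convention $T_{i-1} \le t < T_i$ used for $\overline{A}(t, \varSigma, \mathcal{T})$ in \eqref{eq16}. Beyond this consistency check, the proof is essentially a direct substitution using the two cited STM properties.
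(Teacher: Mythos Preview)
Your proposal is correct and follows essentially the same approach as the paper: both arguments use STM Property~1 on each subinterval $[T_{i-1},T_i)$ to obtain $x(t)=\Phi^{\sigma_i}(t,T_{i-1})x(T_{i-1})$, then invoke Property~2 to rewrite $\Phi^{\sigma_i}(t,T_{i-1})=\Phi^{\sigma_i}(t,T_0)\Phi^{\sigma_i}(T_{i-1},T_0)^{-1}$, with the step-function bracket serving only to select the active branch. Your treatment is in fact slightly more careful than the paper's---you explicitly verify the indicator collapses the sum, justify $x(T_{i-1})$ by induction, and check the right-continuity convention at switching instants---whereas the paper simply displays the piecewise form \eqref{eq9} and compacts it.
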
 

\begin{proof}

Using the STM properties 1 and 2, the state $x$ at the $i$\textsuperscript{th} switching time is
\begin{equation}
\label{eq3}
x(T_i)=\overline{\Phi}(T_i,T_0)x_0=\left[\prod_{j=i}^1 \Phi^{\sigma_j}(T_j,T_{j-1}) \right]x_0
\end{equation}
where $\overline{\Phi}(T_i,T_0)$ is the state-transition matrix corresponding to  $\overline{A}(t,\varSigma,\mathcal{T})$ as defined in \eqref{eq16}. 
Hence, the state evolution is defined as a piecewise function of time, each piece corresponding to a time interval between consecutive switching times $\{T_i,T_{i+1}\}$: 
\begin{equation}
\label{eq9}
 x(t) = 
\begin{cases}
\Phi^{\sigma_1}(t,T_{0})x(T_0), & T_0\leq t<T_1 \\
\Phi^{\sigma_2}(t,T_{1})\Phi^{\sigma_1}(T_1,T_0)x(T_0), & T_1\leq t <T_2 \\
\;\;\;\;\;\;\;\;\;\; \vdots & \;\;\;\;\;\;\;\;\vdots \\
\Phi^{\sigma_M}(t,T_{M-1}) [\prod\limits_{j=M-1}^1 \Phi^{\sigma_j}(T_j,T_{j-1})]x(T_0)     & T_{M-1}\leq t \leq T_M
\end{cases}
\end{equation}
For a more compact representation of the state, we employ unit step functions and (\ref{eq3}) to get 
\begin{equation}
\label{eq132}
x(t)= \sum_{i=1}^{M}  \Big\{ [1(t-T_{i-1})-1(t-T_i)]\Phi^{\sigma_i}(t,T_{i-1})x(T_{i-1})\Big\}
\end{equation}
where, from STM property 2,  
\begin{equation}
\label{eq7}
\Phi^{\sigma_i}(t,T_{i-1})= \Phi^{\sigma_i}(t,T_0)\Phi^{\sigma_i}(T_{i-1},T_0)^{-1}.
\end{equation}
This concludes the proof. 
\end{proof}

Prior to the iterative optimization, the STM operators $\Phi^j(t,T_0)$ are solved off-line for $t\in[T_0,T_M]$ and for all different modes $j=1,...,N$. Thus, given a mode schedule, the calculation of state $x(t)$ via (\ref{eq2})  requires no additional integrations beyond the off-line calculations used for $\Phi^j(t,T_0)$.

\paragraph{Evaluating $P(t)$}
As proven in \cite{caldwell2012single}, an analogous operator to the STM exists for the evaluation of the relation $P(t)$ appearing in (\ref{rhoeq1}). As in \cite{caldwell2012single}, we will refer to the operator as the adjoint-transition matrix (ATM) and use $\Psi^j(\cdotp,T_M):\mathbb{R}  \rightarrow \mathbb{R}^{n\times n} $ to denote the ATM corresponding to each mode $j\in\{1,...,N\}$. The ATM are defined to be the solutions to the following $N$ matrix differential equations:
\begin{equation}
\label{eq8}
\frac{d}{dt}\Psi^j(t,T_M)= -A_j(t)^T \Psi^j(t,T_M)-\Psi^j(t,T_M)A_j(t)-Q(t)
\end{equation}
subject to the initial condition $\Psi^j(T_M,T_M)=0_{n\times n}$.

The following two ATM properties will be useful for evaluating $P(t)$ given a mode schedule $\{\varSigma,\mathcal{T}\}$. For an arbitrary ATM, $\Psi$, characterized by $A(t) $ and associated STM $\Phi$, and cost function defined by $Q(t)$, we have \cite{caldwell2012single}:
       \begin{enumerate}
              \item $P(t)=\Psi(t,\tau)\circ P(\tau) := \Psi(t,\tau)+\Phi(\tau,t)^T P(\tau)\Phi(\tau,t) $
              \item $\Psi(t_1,t_3)= \Psi(t_1,t_2)\circ \Psi(t_2,t_3):= \Psi(t_1,t_2)+\Phi(t_2,t_1)^{T}\Psi(t_2,t_3)\Phi(t_2,t_1). $              
      \end{enumerate}
Notice that Property 2 of ATM is equivalent to Property 2 of STM and similarly allows us to evaluate the co-state.  

\begin{proposition}
The relation $P(t)$ at all $t\in[T_0,T_M]$ depends on the current mode schedule $\{\varSigma,\mathcal{T}\}$, the STM $\Phi^j(\cdotp,T_0)$  and the ATM $\Psi^j(\cdotp,T_M)$ and is given by  $P(t):=\varrho(t,\varSigma,\mathcal{T})$ where
\begin{equation}
\label{rho}
\begin{split}
\varrho(t,\varSigma,\mathcal{T})
= \sum_{i=1}^{M} \bigg\{ \Big[1(t-T_{i-1})-1(t-T_i)\Big]&\cdotp\\ \Big[\Psi^{\sigma_i}(t,T_M)+ \Phi^{\sigma_i}(t,T_0)^{-T}   \Phi^{\sigma_i}(T_i,T_0)^{T}   [P(T_{i})&\\-\Psi^{\sigma_i}(T_{i},T_M)] \Phi^{\sigma_i}(T_i,T_0) & \Phi^{\sigma_i}(t,T_0)^{-1} \Big] \bigg\}\\
 \mbox{subject to } P(T_M)=& P_1,  
\end{split}
\end{equation}
$1(\cdot)$ is the step function defined in \eqref{step} and  $T_{i}$ , $\sigma_i$ are the $i^{th}$ switching time and corresponding active mode as defined in Section \ref{def}. 
\end{proposition}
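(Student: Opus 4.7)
The plan is to mirror the structure of the proof of Proposition 1, but propagating backwards from $T_M$ rather than forwards from $T_0$, since the terminal condition for $P$ lives at $T_M$. Fix a time $t \in [T_{i-1}, T_i)$ so that the active mode on this interval is $\sigma_i$ and $P(\cdot)$ evolves according to the ATM of mode $\sigma_i$. Applying ATM Property 1 between the ``current'' time $t$ and the end of its piecewise interval $T_i$ gives
\begin{equation*}
P(t) = \Psi^{\sigma_i}(t, T_i) + \Phi^{\sigma_i}(T_i, t)^T\, P(T_i)\, \Phi^{\sigma_i}(T_i, t),
\end{equation*}
where $P(T_i)$ is itself obtained recursively from ATM Property 2 applied at each later switching time, analogous to the backward product that appeared in equation (\ref{eq3}) for the state.

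The remaining work is algebraic: we must rewrite both $\Psi^{\sigma_i}(t,T_i)$ and $\Phi^{\sigma_i}(T_i,t)$ purely in terms of the off-line quantities $\Psi^{\sigma_i}(\cdot, T_M)$ and $\Phi^{\sigma_i}(\cdot, T_0)$. For the ATM piece, I would invoke ATM Property 2 in the form
\begin{equation*}
\Psi^{\sigma_i}(t,T_M) = \Psi^{\sigma_i}(t, T_i) + \Phi^{\sigma_i}(T_i,t)^T \Psi^{\sigma_i}(T_i,T_M)\Phi^{\sigma_i}(T_i,t),
\end{equation*}
and solve for $\Psi^{\sigma_i}(t,T_i)$. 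Substituting this back collapses the two terms into a single expression of the form $\Psi^{\sigma_i}(t, T_M) + \Phi^{\sigma_i}(T_i,t)^T\bigl[P(T_i) - \Psi^{\sigma_i}(T_i,T_M)\bigr]\Phi^{\sigma_i}(T_i,t)$. For the STM piece, STM Property 2 gives $\Phi^{\sigma_i}(T_i,t) = \Phi^{\sigma_i}(T_i,T_0)\,\Phi^{\sigma_i}(t,T_0)^{-1}$, whose transpose converts $\Phi^{\sigma_i}(T_i,t)^T$ into the product $\Phi^{\sigma_i}(t,T_0)^{-T}\Phi^{\sigma_i}(T_i,T_0)^T$ appearing in the claimed formula.

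Having derived the expression valid on the generic interval $[T_{i-1}, T_i)$, the final step is packaging. I would use the indicator $1(t-T_{i-1}) - 1(t - T_i)$ to pick out the $i$-th interval, sum over $i = 1, \ldots, M$, and verify the terminal condition $P(T_M) = P_1$ directly from the $i = M$ summand using $\Psi^{\sigma_M}(T_M, T_M) = 0_{n\times n}$ and $\Phi^{\sigma_M}(T_M, T_0)^{-T}\Phi^{\sigma_M}(T_M, T_0)^T = I_n$. This is exactly the switching-control analogue of the piecewise decomposition carried out in (\ref{eq9})--(\ref{eq132}).

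The main obstacle is bookkeeping rather than any substantive new idea: one must consistently change the reference time of every STM/ATM factor to $T_0$ or $T_M$ (so only pre-computed operators appear) without sign errors, and must correctly recurse through $P(T_1), P(T_2), \ldots, P(T_{M-1})$ using ATM Property 2 backwards from $P(T_M) = P_1$. A secondary subtlety is handling the boundary $t = T_i$, where the step-function representation places the value in the next interval; because $P(\cdot)$ is continuous in time (the ATM ODE (\ref{eq8}) has a continuous right-hand side even across mode switches), both summands yield the same value at the switching times, so the representation is well-defined.
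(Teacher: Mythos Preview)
Your proposal is correct and follows essentially the same route as the paper's own proof: apply ATM Property~1 on each interval $[T_{i-1},T_i)$ to express $P(t)$ via $P(T_i)$, invoke ATM Property~2 to replace $\Psi^{\sigma_i}(t,T_i)$ by quantities referenced to $T_M$, use STM Property~2 to reference $\Phi^{\sigma_i}(T_i,t)$ to $T_0$, and then package everything with step functions. Your added remarks on verifying the terminal condition and on continuity of $P$ at switching times are sound refinements not spelled out in the paper, but the underlying argument is the same.
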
 

\begin{proof}
From the ATM properties 1 and 2, $P(t)$ at the $i$\textsuperscript{th} switching time is
\begin{equation}
%\label{eq9}
\begin{split}
P(T_i)&=\overline{\Psi}(T_i,T_M)\circ  P(T_M)\\&=\overline{\Psi}(T_i,T_M)+\overline{\Phi}(T_M,T_i)^T P(T_M)\overline{\Phi}(T_M,T_i)
\end{split}
\end{equation}
where $\overline{\Psi}(T_i,T_M)$ is the adjoint-transition matrix corresponding to $\overline{A}(t,\varSigma,\mathcal{T})$ as defined above. From ATM property 2, this is equal to
\begin{equation}
\label{eq10}
\begin{split}
\overline{\Psi}(T_i,T_M)=&{\Psi}^{\sigma_{i+1}}(T_i,T_{i+1})\circ \cdots \circ {\Psi}^{\sigma_{M}}(T_{M-1},T_{M}) \\
=& \sum\limits_{m=i+1}^{M} \overline{\Phi}(T_{m-1},T_i)^T {\Psi}^{\sigma_{m}}(T_{m-1},T_{m}) \overline{\Phi}(T_{m-1},T_i).
\end{split}
\end{equation}
As in the previous case, we aim to derive an expression for the evaluation of $P(t)$ at random time instances, as needed. Again, we will represent $P(t)$ as a piecewise function of time: 
\begin{equation}
\label{eq12}
 P(t) = 
 \begin{cases}
\Psi^{\sigma_M}(t,T_{M})\circ P(T_M), & T_{M-1}\leq t<T_M \\
\Psi^{\sigma_{M-1}}(t,T_{M-1})\circ  P(T_{M-1}), & T_{M-2}\leq t <T_{M-1} \\
\;\;\;\;\;\;\;\;\;\; \vdots & \;\;\;\;\;\;\;\;\vdots \\
\Psi^{\sigma_{1}}(t,T_{1})\circ  P(T_1), & T_{0}\leq t <T_{1}
\end{cases}
\end{equation}
For a more compact representation of $P(t)$, we employ unit step functions to get
\begin{equation}
\label{rho2}
\begin{split}
P(t)=& \sum_{i=1}^{M} \Big\{ [1(t-T_{i-1})-1(t-T_i)][\Psi^{\sigma_i}(t,T_{i})\circ P(T_{i})] \Big\}\\
\end{split}
\end{equation} 
where, from ATM property 2,
\begin{equation}
\label{psi}
\Psi^{\sigma_i}(t,T_{i})= \Psi^{\sigma_i}(t,T_M)+\Phi^{\sigma_i}(T_i,t)^{T}\Psi^{\sigma_i}(T_i,T_M)\Phi^{\sigma_i}(T_i,t).
\end{equation}
Combining ATM property 1 with (\ref{rho}) and (\ref{psi}), we end up with the expression
\begin{equation}
\label{rho3}
\begin{split}
P(t)&
= \sum_{i=1}^{M} \Big\{ [1(t-T_{i-1})-1(t-T_i)]\cdotp\\& [\Psi^{\sigma_i}(t,T_M)+ \Phi^{\sigma_i}(T_{i},t)^T [P(T_{i})-\Psi^{\sigma_i}(T_{i},T_M)] \Phi^{\sigma_i}(T_{i},t)] \Big\} 
\end{split}
\end{equation}
with $ \Phi^{\sigma_i}(T_{i},t) = \Phi^{\sigma_i}(T_i,T_0)  \Phi^{\sigma_i}(t,T_0)^{-1}$.
This completes the proof. 
\end{proof}

Prior to the iterative optimization, the ATM operators $\Psi^j(t,T_M)$ are solved off-line for all $t\in[T_0,T_M]$ and for all different modes $j=1,...,N$. Thus,
given a mode schedule, the calculation of $P(t)$ via (\ref{rho})  requires no additional integrations.

\paragraph{Calculating the descent direction using the mode insertion gradient}
An iterative optimization method computes a new estimate of the optimum by taking a step in a search direction from the current estimate of the optimum so that a sufficient decrease in cost is achieved. The mode insertion gradient $d(t)$ defined above, has a similar role in the mode scheduling optimization as the gradient does for finite-dimensional optimization. It has been shown in \cite{838,egerstedt2006transition,gonzalez2010descent} that $-d^k(t)$ is a descent direction.  

\begin{proposition}
An element of $d^k(t)$ is given by
\begin{equation}
\label{modedef}
\begin{split}
d_i^k(t): = \chi(t,\varSigma(u^k),\mathcal{T}(u^k))^T& \varrho(t,\varSigma(u^k),\mathcal{T}(u^k))\\&[A_{i}(t) - A_{\sigma^k (t)}(t)]\chi(t,\varSigma(u^k),\mathcal{T}(u^k))
\end{split}
\end{equation}
where $i=\{1,...,N\}$.
\end{proposition}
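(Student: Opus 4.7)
The plan is to obtain (\ref{modedef}) by a direct substitution argument, combining the previously established formula (\ref{modenew}) for the mode insertion gradient with the algebraic evaluation formulas developed in the two preceding propositions. The proof should be short because all the analytical work has already been done: (\ref{modenew}) already gives the gradient in terms of $x(t)$ and $P(t)$ (using the linear relationship $\rho(t)=P(t)x(t)$ from (\ref{rhoeq1})), and the two propositions give closed-form algebraic expressions for $x(t)$ and $P(t)$ in terms of the mode schedule and the precomputed STM/ATM.

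First, I would specialize (\ref{modenew}) to iteration $k$, i.e., to the switching control $u^k$ with mode schedule $\{\varSigma(u^k),\mathcal{T}(u^k)\}$ and corresponding active mode function $\sigma^k(t)$. This gives
\begin{equation*}
d_i^k(t) = x^k(t)^T P^k(t)^T\bigl[A_i(t) - A_{\sigma^k(t)}(t)\bigr]x^k(t).
\end{equation*}
Next, I would invoke the two preceding propositions to replace $x^k(t)$ by $\chi(t,\varSigma(u^k),\mathcal{T}(u^k))$ and $P^k(t)$ by $\varrho(t,\varSigma(u^k),\mathcal{T}(u^k))$. This substitution yields exactly (\ref{modedef}), up to the transpose on $\varrho$.

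The one small observation that needs to be recorded, and which could be the only real step requiring comment, is that $P(t)$ is symmetric so that $P^k(t)^T=P^k(t)$ and the transpose in (\ref{modenew}) can be dropped to match the form (\ref{modedef}). Symmetry follows from the matrix differential equation (\ref{Rhoeq}): the terminal condition $P(T_M)=P_1$ is symmetric by assumption, and $Q(t)$ is symmetric, so the right-hand side of (\ref{Rhoeq}) preserves symmetry along backward integration. Equivalently, the ATM $\Psi^j$ inherits symmetry from (\ref{eq8}) with symmetric terminal condition $0_{n\times n}$ and symmetric forcing $-Q(t)$, and the composition rules in properties 1--2 of the ATM preserve symmetry; thus $\varrho(t,\varSigma,\mathcal{T})$ is symmetric at every $t$.

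There is no real obstacle here: the proposition is essentially a bookkeeping statement that consolidates the earlier results into a single algebraic formula for the descent direction used in Algorithm \ref{sioms}. The payoff, rather than the difficulty, is that (\ref{modedef}) expresses $d_i^k(t)$ entirely in terms of the precomputed STMs $\Phi^j(\cdot,T_0)$ and ATMs $\Psi^j(\cdot,T_M)$ and the current mode schedule, so evaluating the descent direction at any time $t$ requires no online integration---precisely the single-integration property the paper is advertising.
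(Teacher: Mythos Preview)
Your proposal is correct and takes the same substitution approach as the paper, which simply states that (\ref{modedef}) follows from (\ref{eq2}), (\ref{rho}) and (\ref{mode}). You are in fact more careful than the paper's own proof, which does not comment on the dropped transpose between (\ref{modenew}) and (\ref{modedef}); your symmetry argument for $P(t)$ fills that small gap.
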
 

\begin{proof}
After the definition for the state and co-state, an equivalent expression for the mode insertion gradient may be obtained from (\ref{eq2}),(\ref{rho})
and (\ref{mode}). 
\end{proof}

\paragraph{Update rule} 
A new estimate of the optimal switching control $u^{k+1}$ is obtained by varying from the current iterate $u^k$ in the descent direction and projecting the result to the set of admissible switching control trajectories. For this purpose, we employ  the max-projection operator (\ref{proj}) and get a new estimate of the optimum,
\begin{equation}
\label{projn}
\begin{split}
& u^{k+1}(t) = \mathcal{Q}(u^k(t)-\gamma^k d^k(t))\\
& x^{k+1}(t):=\chi(t,\varSigma(u^{k+1}),\mathcal{T}(u^{k+1}))
\end{split}
\end{equation}
\begin{comment}
\begin{equation}
\mathcal{P}_n(\alpha,\mu):= 
\begin{cases}
& x(t)=\chi(t,\varSigma(u),\mathcal{T}(u))\\
& u(t) = \mathcal{Q}(\mu (t))
\end{cases}
\end{equation}
\end{comment}
where $\mathcal{Q}$ is given by (\ref{pro}). For choosing a sufficient step size $\gamma ^k$, we may utilize a projection-based backtracking process as described in \cite{840}.
 
The reader is referred to \cite{838,839} for a more detailed description of these algorithm steps, along with the associated proofs for convergence. 

\paragraph{Calculating the optimality condition}
The optimality function $\theta^k \in \mathbb{R}$  is \cite{838}  
\begin{equation}
\label{opt}
\theta^k := d_{i_0}^k(t_0) 
\end{equation}
where
\begin{equation}
\label{opt1}
(i_0,t_0)= \arg \min\limits_{i\in \{1,...,N\}, t\in [T_0,T_M]}d_{i}(t). 
\end{equation}
The limit of the sequence of optimality functions is proven to go to zero as a function of iteration $k$ in  \cite{838}. This allows us to utilize $\theta^k$ also as a terminating condition for the iterative algorithm.

\subsection{A Receding-Horizon Approach}

\begin{figure}[b!]
\centering
\includegraphics{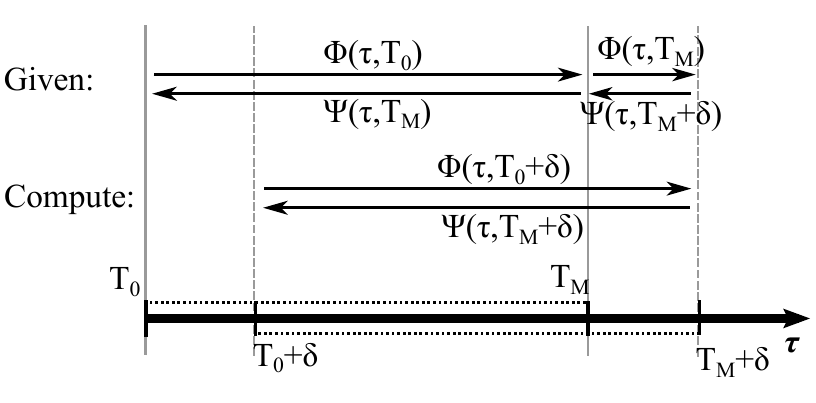} 
      \caption{ An illustration of the operators update step in a receding-horizon scheme. A differential equation needs to be integrated only over a limited time interval $\delta$ rather than the time horizon $(T_M-T_0):=T$. 
      }
      \label{update}
  \end{figure} 
  
Section \ref{algor1}  provides an off-line approach for computing an open-loop optimizer for the problem in section \ref{problem}. Here, we follow a receding-horizon approach in order to achieve closed-loop optimization over an infinite time horizon.

Receding-horizon control strategies \cite{mayne2014model,summers2011multiresolution} have become quite popular recently, partly due to their robustness to model uncertainties or to sensor measurement noise. This paper's approach enables real-time closed-loop execution of  finite-horizon optimal control algorithms. Based on our performance evaluation in the next section, the finite-horizon SIOMS is well-suited for receding-horizon linear switched-system control because it is fast and accurate.

A receding-horizon scheme for optimal mode scheduling can be implemented as follows.
From the current time $t$ and measured state $x(t)$ as the initial condition in \eqref{eq15}, use SIOMS to obtain an optimal switching control $u_t(\tau)$ for $\tau\in [t,t+T]$. Apply the calculated control for time duration $\delta$ with $0<\delta\leq T$  to drive the system from $x(t)$ at time $t$ to $x(t+\delta)$. Set $t\leftarrow t+\delta$ and repeat.  This scheme requires execution of the optimal mode scheduling algorithm every $\delta$ seconds. 

Following Algorithm \ref{sioms}, SIOMS requires an off-line calculation of operators before the on-line iterative process is executed. However, in order for SIOMS to be efficient in a receding-horizon approach, it is not preferable to recalculate each STM and ATM every $\delta$ seconds for the next $(T_M-T_0):=T$ seconds. Instead, each STM and ATM of the previous time interval $[T_0,T_M]$ are updated for the new information on
$[T_M,T_M+\delta]$ only (Fig. \ref{update}). Such an approach is feasible because of the following lemma.

\begin{lemma}
\label{upd}
 Suppose $\Phi(t,T_0)$ and $\Psi(t,T_M)$ are known for all $t\in[T_0,T_M]$. Assuming also that $\Phi(t,T_M)$ and $\Psi(t,T_M')$ are known for all $t\in[T_M,T_M']$, the STM and ATM for the time interval $t\in[T_0',T_M']$ with $T_0<T_0'$ and $T_M<T_M'$ are given by 
 \begin{equation}
 \label{update1} 
 \Phi(t,T_0') = 
  \begin{cases}
 \Phi(t,T_0) \Phi(T_0',T_0)^{-1}, & T_0'\leq t<T_M \\
\Phi(t,T_M) \Phi(T_M,T_0) \Phi(T_0',T_0)^{-1}, & T_M\leq t\leq T_M' 
 \end{cases}
 \end{equation}
and
 \begin{equation}
 \label{update2} 
 \Psi(t,T_M') = 
  \begin{cases}
 \Psi(t,T_M) \circ \Psi(T_M,T_M'), & T_0'\leq t<T_M \\
\Psi(t,T_M'), & T_M\leq t\leq T_M'. 
 \end{cases}
 \end{equation} 
\end{lemma}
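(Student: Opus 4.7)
The plan is to derive each of the four cases in \eqref{update1}--\eqref{update2} by a direct application of Property~2 of the respective transition matrix, combining the data known on the original horizon $[T_0,T_M]$ with that on the extension $[T_M,T_M']$. I would handle the STM and the ATM separately, and within each, split the time argument $t$ at the seam point $T_M$.

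For the STM, in the first case $T_0' \le t < T_M$, I would chain through the original initial time $T_0$: Property~2 gives $\Phi(t,T_0') = \Phi(t,T_0)\,\Phi(T_0,T_0')$, and writing $\Phi(T_0,T_0') = \Phi(T_0',T_0)^{-1}$ shows that both factors are known from the pre-computed operator on $[T_0,T_M]$. In the second case $T_M \le t \le T_M'$, I would chain first through $T_M$, yielding $\Phi(t,T_0') = \Phi(t,T_M)\,\Phi(T_M,T_0')$; here $\Phi(t,T_M)$ is available from the extension, and $\Phi(T_M,T_0')$ is obtained by applying the first-case identity at $t=T_M$, recovering the three-factor product in \eqref{update1}.

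For the ATM, in the first case $T_0' \le t < T_M$ I would apply Property~2 to chain through $T_M$: $\Psi(t,T_M') = \Psi(t,T_M)\circ\Psi(T_M,T_M')$, where the first operand is stored from the old horizon and the second is supplied by the extension. The internal $\circ$ expansion requires $\Phi(T_M,t)$ on $[T_0',T_M]$, which is exactly what the STM step has just provided. In the second case $T_M \le t \le T_M'$, the value $\Psi(t,T_M')$ is available directly by hypothesis, so no identity needs to be invoked.

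There is no deep obstacle here; the argument is essentially bookkeeping with the semigroup identities. The only subtlety worth flagging is that each identity must be applied mode-by-mode: the transition matrices in the statement are defined pointwise by a single $A_j(t)$, and the composition at the seam $T_M$ is meaningful only because the same time-varying matrix governs both sub-intervals. Since the off-line routine in Algorithm~\ref{sioms} maintains $\Phi^j$ and $\Psi^j$ separately for each $j\in\{1,\dots,N\}$, the above argument applies verbatim to each mode, and the result then transfers to the full mode-scheduled system through the piecewise expressions \eqref{eq2} and \eqref{rho}.
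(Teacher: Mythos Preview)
Your proposal is correct and takes essentially the same approach as the paper, which simply states that the lemma is a straightforward consequence of STM Property~2 and ATM Properties~1 and~2 from Section~\ref{algor1}. Your case-by-case chaining through the seam point $T_M$ is exactly the intended argument, just spelled out in more detail than the paper provides.
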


\begin{proof}
The proof of Lemma \ref{upd} is a straightforward consequence of STM property 2 and ATM properties 1 and 2 in Section \ref{algor1}. 
\end{proof}

Despite its simplicity, Lemma \ref{upd} is the key to efficient real-time execution of a receding-horizon hybrid control scheme. 
Using Lemma \ref{upd} with $T_0' = T_0+\delta$ and $T_M' = T_M+\delta$, we formulate Algorithm \ref{rec} that allows for real-time closed-loop SIOMS execution. The proposed formulation requires a numerical integration over the limited time interval $\delta$ rather than the full time horizon $T$ (step 3.1 in Algorithm \ref{rec}). A graphical representation of the operators update every $\delta$ seconds (step 3 in Algorithm \ref{rec}) is given in Fig. \ref{update}.

\begin{algorithm}
	 \footnotesize{
     %\centering
     %\framebox{\parbox{3.3in}{
     \caption{Receding-Horizon SIOMS}
     \label{rec}  
     \hrule
     
     \vspace{2 mm}
     \begin{itemize}
     \item Initialize current time $t$, finite horizon $T$ and control duration $\delta$. 
     
     \item Solve for $\Phi^j(\tau,t)$ and $\Psi^j(\tau,t+T)$   $\forall j \in \{1,...,N\}$ and $\tau \in [t,t+T]$.
     
     \end{itemize}
      \hrule
      \vspace{2 mm}
            
     Do every $\delta$ seconds while control $u_t(\tau)$ is applied: 
      \begin{enumerate}
              \item [\textbf{1}.] Update $T_0\leftarrow t$, $T_M\leftarrow t+T$ and set $x(T_0)=x(t)$. 
              \item [\textbf{2}.] Run on-line part of Algorithm \ref{sioms} to get $u_t(\tau)$ for $\tau\in [t,t+\delta]$.
               \begin{enumerate}
              \item [\textbf{3}.1] Solve for $\Phi^j(\tau,T_M)$ and $\Psi^j(\tau,T_M+\delta)$ $\forall \tau \in [T_M,T_M+\delta]$. *
              \item [\textbf{3}.2]Get $\Phi^j(\tau,T_0+\delta)$ and $\Psi^j(\tau,T_M+\delta)$ $\forall j \in \{1,...,N\}$ and $\tau \in [T_0+\delta,T_M+\delta]$  from known $\Phi^j(\tau,T_0)$ and $\Psi^j(\tau,T_M)$ using Lemma \ref{upd}. *
              \item [\textbf{3}.3]Update $\Phi(\tau,T_0) \leftarrow \Phi(\tau,T_0+\delta)$ and $\Psi(\tau,T_M) \leftarrow\Psi(\tau,T_M+\delta)$. *
              \end{enumerate} 
              
      \end{enumerate}
      \hrule
      \vspace{1 mm}
      \footnotesize{* In a real-time application, step 3 can be executed at any time when processing requirements are low, thus without increasing the amount of time needed for calculation of control (i.e.\ steps 1-2). }}
      
      % }}
\end{algorithm}

 \begin{figure}
 \centering
 \includegraphics{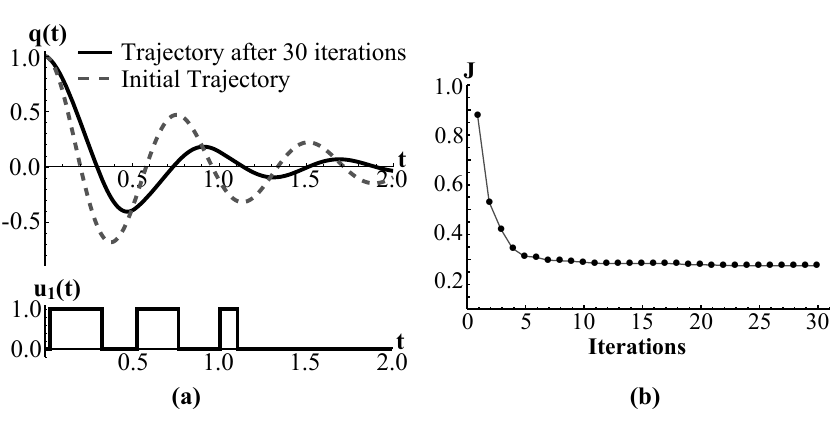} 
       \caption{ Spring-Mass-Damper vibration control: (a) Optimal trajectory and switching control and (b) the cost versus iteration count. 
       }
       \label{example1}
   \end{figure} 
  
\section{Open-Loop Implementation and  Evaluation}
\label{open}

In this section, SIOMS is implemented in a standard open-loop manner (see Algorithm \ref{sioms}) and its performance is evaluated in terms of i) execution time, ii) error of approximation and iii) computational complexity.  
 
As a baseline example, we use SIOMS to apply switched stiffness vibration control on an unforced spring-mass-damper system.  A linear time-invariant system  is particularly suited for evaluation purposes as an analytical solution exists and can be compared with the computed numerical solution. Variants of this example system have been used extensively in literature for the evaluation of hybrid controllers \cite{meyer2012comparison,cunefare2000state}.  Denoting by $k_i$ the variable spring stiffness and by $m$ and $d$ the mass and damping coefficient, the system equations take the form in \eqref{eq17} with 
\begin{equation}
\label{mass}
A_i(t)=  \begin{pmatrix}
& 0  & 1 &\\
& -\frac{k_i}{m} & -\frac{d}{m} &
\end{pmatrix}\;\;\;\;
\end{equation}
and $N=2$ i.e.\ two possible modes. The state vector is $x=[q(t),\dot{q}(t)]^T$, where $q(t)$ is the mass position. System parameters are defined as $m=1$, $d=2$, $k_1=30$, and $k_2=70$. Our objective is to find the mode schedule that minimizes the system vibration and is accordingly characterized by  the quadratic cost functional \eqref{cost} with $Q=diag[1,0.1]$, $P_1=0_{2\times2}$ and $[T_0,T_M]=[0,2]$. As an initial estimate $u^0(t)$, the system is in mode 2 with an initial condition $x_0=[1,0]^T$ and cost $J_0 \approx 0.98$. 

Fig. \ref{example1}a shows the optimal switching control and corresponding optimal $q(t)$ trajectory after 30 SIOMS iterations. The cost is reduced to $J \approx 0.38$ ( Fig. \ref{example1}b).

\subsection{Execution Time and Approximation Error}

\begin{figure}[t]
\centering
\includegraphics{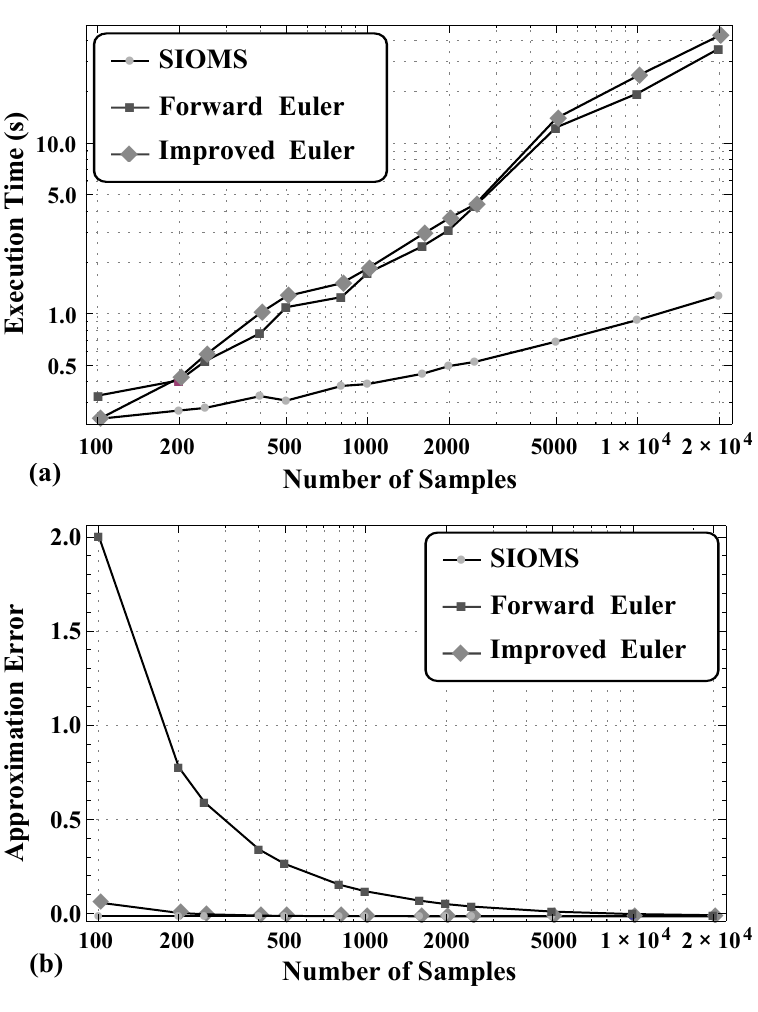} 
      \caption{ Variation of (a) execution times and (b) approximation errors (2-norm of the root-mean-squared differences between the analytic and computed state values) with respect to the selected number of samples evaluated across 3 different optimization methods. SIOMS can achieve both objectives (i.e.\ fast execution and high approximation accuracy) for a wide range of sample sizes. 
      }
      \label{dataplot}
  \end{figure}

The execution time of iterative optimal control algorithms might be prohibitive for real-time applications \cite{vasudevan2013consistent2}. It is often the case that choosing the appropriate numerical techniques for integrating the state and adjoint equations, \eqref{eq15} and \eqref{rhoeq}, is conducive to achieving lower execution times.  However, there is a trade-off to consider---a fast numerical ODE solver might be prone to approximation errors. 
In open-loop SIOMS (Algorithm \ref{sioms}), no differential equations need to be numerically solved as part of the online iterative process. Hence, we will show that both the execution time and approximation error can be kept low at the same time.  

Referring to Algorithm \ref{sioms}, a set of operator trajectories is pre-calculated and stored off-line, covering the full time horizon [$T_0,T_M$]. In practice, the exact number of stored samples $\mathcal{N}$ needs to be determined to reflect the processor's computational capacity and memory availability.\footnote{Interpolating methods may be used for intermediate time instances.} 
We use the mass-spring-damper to illustrate how the SIOMS execution time and error of approximation vary across different choices of sample sizes (Fig. \ref{dataplot}). 
\begin{comment}
By using the off-line operators (i.e.\ STM and ATM) in SIOMS, we apply the following discretization to the state and co-state trajectories of a linear time-varying system: 
\begin{equation}
\label{discr}
\begin{split}
& x(t_{h+1})=\Phi(t_{h+1},t_h) x(t_h), \;\;\;\;\;\;  x(t_0)=x_0 \\
& \rho(t_h) = P(t_h) x(t_h)  \;\;  \mbox{with } P(t_h) = \Psi(t_h,t_{h+1}) \circ P(t_{h+1}), \;\;\;  P(t_{\mathcal{N}})=P_1 
\end{split}
\end{equation}
where $\mathcal{N}$ is the number of samples and $  t_{h+1} = t_h + \Delta t $ with $\Delta t$ the step size\footnote{It is assumed that $\Phi(\cdotp,\cdotp)$ and $\Psi(\cdotp,\cdotp)$  correspond to the STM and ATM of the linear mode that is active from $t_h$ to $t_{h+1}$.}. For this example, $\Delta t$ is constant so that the samples are evenly-spaced. 
\end{comment}

For comparison purposes, we additionally evaluate the performance of the projection-based mode scheduling algorithm in \cite{838} using the same example and employing two different numerical techniques, namely the Forward and Improved Euler methods, for the integration of the state and adjoint equations. In contrast to SIOMS where expressions exist for the state and co-state evaluation (\eqref{eq2} and \eqref{rho}), here, the solution to the state and co-state equations, \eqref{eq17} and \eqref{rhoeq}, is approximated in every algorithm iteration. The Forward Euler method provides the following approximation to the state and co-state trajectories of a general linear time-varying switched system:
\footnotesize
\begin{equation}
\label{discrexpl}
\begin{split}
& x(t_{h+1})=(\mathbb{I} + \Delta t \cdot \overline{A}(t_h,\varSigma,\mathcal{T})) x(t_h), \;\;\;   \;\;\;  x(t_0)=x_0 \\
& \rho(t_h) = (\mathbb{I} + \Delta t \cdot \overline{A}(t_{h+1},\varSigma,\mathcal{T}))^T \rho(t_{h+1}) + \Delta t \cdot Q x(t_{h+1}),  \;\;\;  \;\;\;  \rho(t_{\mathcal{N}})=P_1 x(t_{\mathcal{N}}) 
\end{split}
\end{equation}
\normalsize
where $\mathbb{I}$ is the $n\times n$ identity matrix, $t_{h+1} = t_h + \Delta t $ with $\Delta t$ the step size and $\overline{A}(t,\varSigma,\mathcal{T})$ is defined in \eqref{eq16}. The Euler method is simple but can be unstable and inaccurate. On the other hand, Improved Euler (i.e.\ two-stage Runge Kutta) maintains simplicity but with reduced approximation errors. It applies the following approximation:
\small  
\begin{equation}
\label{discrimpr}
\begin{split}
 x(t_{h+1})=\Big[\mathbb{I} + \frac{\Delta t}{2}  A(t_h) + \frac{\Delta t}{2}  A(t_{h+1}) (\mathbb{I} + \Delta t \cdot A(t_h) )\Big] x(t_h),& \;\;\;   \;\;\;  x(t_0)=x_0 \\
 \rho(t_h) = \Big[\mathbb{I} + \frac{\Delta t}{2}A(t_{h+1})^T+  \frac{\Delta t}{2}  A(t_h)^T (\mathbb{I} + \Delta t \cdot A(t_{h+1})^T)\Big]\rho(& t_{h+1})\\+ \Delta t (\mathbb{I} +  \frac{\Delta t}{2}A(t_h)^T)  Q x(t_{h+1}),  
 \;\;\;\;\;\;\;\;\; \rho(t_{\mathcal{N}})&=P_1 x(t_{\mathcal{N}}) 
\end{split}
\end{equation}
\normalsize
where $\mathbb{I}$ is the $n\times n$ identity matrix and $t_{h+1} = t_h + \Delta t $ with $\Delta t$ the step size. It is assumed for notational simplicity that $A(\cdotp) := \overline{A}(\cdotp,\varSigma,\mathcal{T})$ defined in \eqref{eq16}. Notice that both approximation methods  for the state and co-state, \eqref{discrexpl} and \eqref{discrimpr}, depend on the step size $\Delta t$ as opposed to SIOMS state and co-state expressions \eqref{eq2} and \eqref{rho} that are independent of a step size.  

In the following example, the execution time and error of approximation are measured against the selected number of samples $\mathcal{N}$. The step size $\Delta t$ is constant so that the samples are evenly-spaced. For the Forward and Improved Euler methods, the number of samples $\mathcal{N}$ determines  the fixed step size $\Delta t$ used for online integration of \eqref{eq15} and \eqref{rhoeq} resulting in the approximations \eqref{discrexpl} and \eqref{discrimpr}. However, in SIOMS the number of samples $\mathcal{N}$ does not determine the step size used in the off-line numerical integration---instead, the STM and ATM
equations, \eqref{eq55} and \eqref{eq8}, are numerically solved~\footnote{For this example, equations  \eqref{eq55} and \eqref{eq8} are solved by a fixed-step Improved Euler's method (i.e.\ two-stage Runge Kutta).}and the resulting trajectories are sub-sampled with the desired sampling frequency $1/\Delta t$ to create the final stored data points. Note that we are only able to perform this additional sub-sampling interpolation because it does not affect the total execution time of the online algorithm portion.
The fact that the sub-sampling process is applied on smooth trajectories produced by the continuous vector fields in  \eqref{eq55} and \eqref{eq8}---along with the fact that the expressions for evaluating the state and co-state, \eqref{eq2} and \eqref{rho}, do not depend on any discretization step size---guarantees that approximation accuracy of each sample does not drop as the number of samples decreases.\footnote{Choosing to use interpolating methods might be concerning with regard to approximation accuracy of the full state and co-state trajectories. Regardless, there are two ways to keep approximation errors low: i) by using higher-order interpolating methods and ii) by using a larger number of samples. With SIOMS, we can select a large number of samples without dramatically increasing the execution time (Fig.\ref{dataplot}).}
Regardless of the particular choice of $\Delta t$, the role of $\Delta t$ has the same impact on all three representations  of state and co-state evolution (SIOMS, Forward and Improved Euler)---in each case, $\Delta t$ determines the number of samples $\mathcal{N}$ (that can be) available (without interpolation) during each iteration.
All methods were implemented in MATLAB, on a laptop with an Intel Core i7 chipset.

The results are summarized in Fig. \ref{dataplot}. Figure \ref{dataplot}a illustrates the variation of execution time with respect to the selected number of samples. Execution time refers to the number of seconds required for 10 algorithm iterations---no significant change in cost is observed in subsequent iterations as seen in Fig. \ref{example1}b. In all cases, the final optimal cost was found to be in the range 0.45-0.5. Both Euler methods exhibit a similar rising trend with the execution time reaching a maximum of 13 seconds when 20,001 samples are used (i.e.\ step size of 0.0001 secs). With SIOMS, however, a significantly lower increase rate is observed with a maximum execution time at only approximately 1.3 seconds. The reasoning for this observed difference is that with Euler methods, all samples of the state and co-state trajectories  must be calculated in every iteration whereas in SIOMS one only needs to calculate the state and co-state values necessary for the procedures of the algorithm (e.g. computation of new switching times) using the expressions \eqref{eq2} and \eqref{psi} respectively.

The variation of approximation error with respect to the number of samples is depicted in Fig.  \ref{dataplot}b. Here, by approximation error we refer to the 2-norm of the root-mean-squared (RMS) differences between the analytic and computed state values for all states at sample points. As explained earlier, the error with SIOMS remains approximately zero ($\approx 0.0002$) regardless of the sample size. The trade-off between computation time and approximation error is particularly obvious with the Forward Euler's method, where the error only approaches zero when a maximum number of samples is employed by which time the corresponding execution time is prohibitive. Interestingly, Improved Euler's method starts with a lower error ($\approx 0.07$) and drops to its minimum value of $\approx 0.0002$ when 1600 samples and above are used. With the lowest approximation error ($\approx 0.0002$),  Improved Euler can achieve a minimum execution time of approximately $3$ seconds  compared to  $0.2$ seconds achieved by SIOMS. % (with a number of samples $\approx 301$). 
With low execution time ($\approx0.2$ with 100 samples used), Improved Euler can achieve a minimum error close to $0.1$ compared to $0.0002$ achieved by SIOMS.

\subsection{Computational Complexity}

In Section \ref{algor}, we showed that all the state and co-state information needed in Algorithms \ref{sioms} and \ref{rec}, is encoded in the STM, $\Phi^j(t,T_0)$, and ATM, $\Psi^j(t,T_M)$, $\forall j \in \{1,...,N\} $ which are solved for all $t \in [T_0,T_M]$ prior to the optimization routine. Therefore, the calculation of $x^k(t)$ and $P^k(t)$ and consequently the optimality condition $\theta^k$ relies simply on memory calls and matrix algebra. No additional differential equations need to be solved for during optimization. 

The algorithm complexity can be discussed in terms of the number of matrix multiplications involved in each iteration.  Recall that at each iteration, $x(t)$ is given by (\ref{eq2})  and $P(t)$ by (\ref{rho}), but the total number of state and co-state evaluations depends on the number of time instances the descent direction \eqref{modedef} must be evaluated (e.g. for the calculation of $\theta^k$ in \eqref{opt1}). Taking this into consideration, we will look at the algebraic calculations required for the evaluation of the state, co-state and descent direction at a single time instance $t$.

First, for executional efficiency, one may calculate all the state and co-state values at the switching times, $x(T_i)$ and $P(T_i)$, given the current mode schedule $(\varSigma(u^k),\mathcal{T}(u^k))$  at the beginning of each iteration. To compute the state, begin with $x(T_0)=x_0$ and then recursively calculate
\begin{equation}
\label{eq5}
x(T_i)= \Phi^{\sigma_i}(T_i,T_{i-1})x(T_{i-1}) \forall i\in\{1,..., M-1\}.
\end{equation}
Using STM property 2 and following a similar approach as in the derivation of (\ref{eq2}),  this computation comes down to $2(M-1)$ matrix multiplications, assuming that all $\Phi^j(t,T_0)^{-1}$ for all  $j \in \{1,...,N\} $ have also been stored in memory. 
Similarly, begin with $P(T_M)=P_1$ and then recursively calculate  
\begin{equation}
\begin{split}
P(T_{i})= &\Psi^{\sigma_{i+1}}(T_i,T_M)+ \Phi^{\sigma_{i+1}}(T_{i+1},T_i)^T \\&[P(T_{i+1})-\Psi^{\sigma_{i+1}}(T_{i+1},T_M)] \Phi^{\sigma_{i+1}}(T_{i+1},T_{i}) 
\end{split}   
\end{equation}
for all $i\in\{1,..., M-1\}$. Note that the derivation of the above expression is identical to the derivation of (\ref{rho}). Knowing that all $\Phi^{\sigma_{i+1}}(T_{i+1},T_{i})$ have already been calculated in (\ref{eq5}), another $2(M-1)$ multiplications are required for the calculation of $P(t)$.     
To summarize, the standard computational cost of the algorithm comes down to a total of $4(M-1)$ multiplications per iteration. 

Now,  to evaluate equation (\ref{eq2}) and (\ref{rho})  at any random time $t$ during the optimization process, we only need $6$ additional multiplications, $2$ for the state $x(t)$ and $4$ for the relation $P(t)$. Therefore, to evaluate the descent direction at any random time, $9$ multiplications are required in total, including the algebra involved in \eqref{modedef}.

Finally, each iteration of Algorithm \ref{sioms} involves $4(M-1)$ multiplications for the calculation of $x(T_i)$ and $P(T_i)$, and $9 \lambda$ additional multiplications where $\lambda$ is the number of evaluations of the expression  \eqref{modedef} for the descent direction.

\section{Closed-Loop Simulation and Experimental Implementation} %Evaluation}
\label{closed}

\begin{figure}[t]
\centering
\includegraphics[width=3.3in]{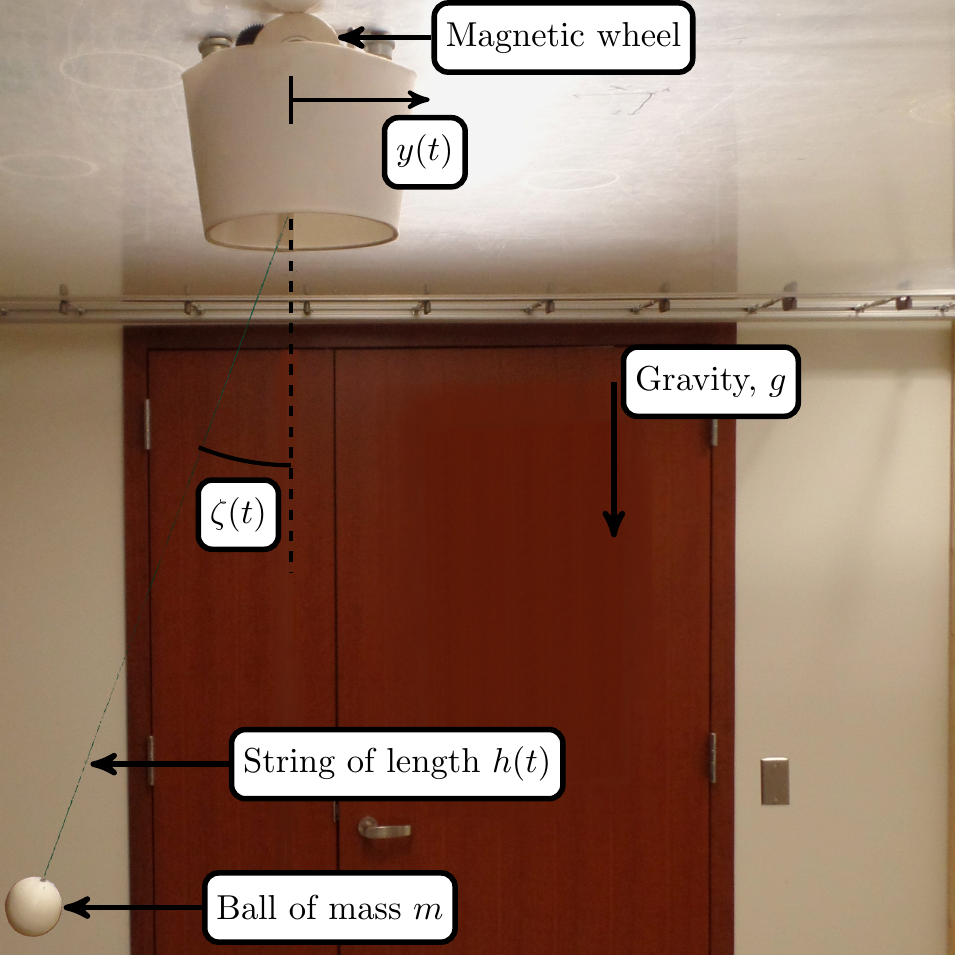} 
      \caption{ The experimental setup consists of an one-dimensional
      differential drive mobile robot with magnetic wheels (i.e.\ cart) and a ball suspended by a string. The string changes length by means of an actuated reeling system attached on the robot. The system configuration is measured by a Microsoft Kinect at $\approx 30Hz$. The full state is estimated using an Extended Kalman Filter. The Robot Operating System (ROS) is used for collecting sensed data and transmitting control signals (i.e.\ robot acceleration values). See more in \cite{943,956}.  
      }
      \label{cart}
  \end{figure}

In this section, SIOMS is implemented in a closed-loop manner (see Algorithm 2) and  is tested on a cart and suspended mass system in simulation and on a customized experimental setup. 

The system model under concern is linear time-varying with two configuration variables, $q(t) = [y(t), \zeta(t)] ^T$,
where $y(t)$ is the horizontal displacement of the cart and $\zeta(t)$ is the
rotational angle of the link as seen in Fig.~\ref{cart}. The length of the link varies with time. Denoting by $h(t)$ the~time-varying string length, $g$ the gravity acceleration and by $m$ and $c$ the mass and damping coefficient, the linearized system equations take the form in \eqref{eq17} with 
\begin{equation}
\label{susp}
A_i(t)=  \begin{pmatrix}
& 0  & 1 &  0  & 0 & 0 &\\
& 0  & 0 &  0  & 0 & -\alpha_i &\\
& 0  & 0 &  0  & 1 & 0 &\\
& 0  & 0 &  -\frac{g}{h(t)}  & -\frac{c}{m h(t)^2} & -\frac{1}{h(t)}\alpha_i &\\
& 0  & 0 &  0  & 0 & 0 &\\
\end{pmatrix}\;\;\;\;,
\end{equation}
\begin{equation}
\label{length}
h(t) = sin(t)+2
\end{equation}
and $N=3$ i.e.\ three possible modes. The cart's horizontal acceleration $\alpha$ is directly controlled and can switch between
the values $\alpha_1=0$, $\alpha_2=-0.5$ and  $\alpha_3=0.5$. Notice we have augmented the state-space from $\mathbb{R}^4$ to $\mathbb{R}^5$ in order to transform the originally affine model to the linear form in \eqref{eq17}. The augmented state vector is $x=[y,\dot{y},\zeta,\dot{\zeta},\tilde{u}]$ where $\tilde{u}$ is the auxiliary state variable. 
System parameters are defined as $m=0.124$, $c=0.05$ and $g=9.8$.

Our objective is to find the mode schedule that minimizes the angle oscillation while the cart remains in a neighborhood near the origin and is accordingly characterized by  the quadratic cost functional \eqref{cost} with $Q=diag[0,0,10,1,0]$ and $P_1=diag[0.1,0.01,10,1,0]$. The system starts at an initial condition $x_0=[0.5,0,0.1,0,1]^T$.

\begin{figure}[t]
\centering
\includegraphics[width=3.3in]{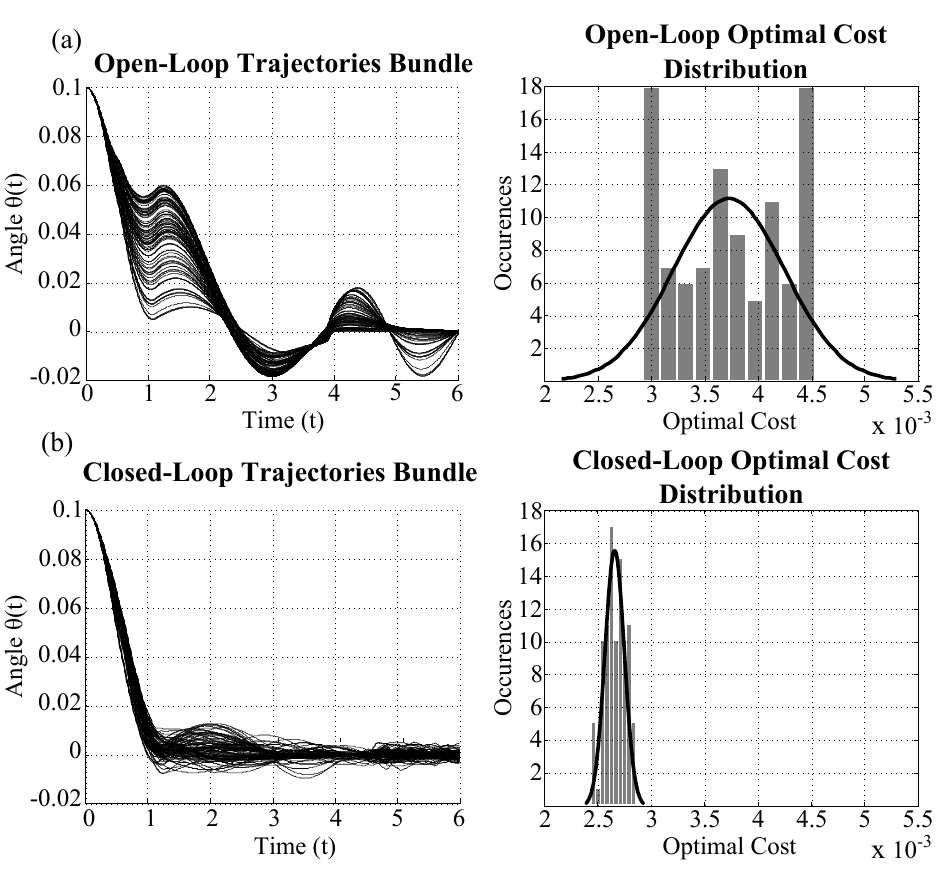} 
      \caption{ Robustness to uncertainty in the damping coefficient through Monte-Carlo analysis. Angle trajectories bundle and optimal cost distribution for (a) open-loop SIOMS (Algorithm \ref{sioms} with $T_0 = 0$ and $T_M = 6$) and (b) closed-loop SIOMS (Algorithm \ref{rec} with $\delta=0.2$ and $T=3$). 
      }
      \label{monte}
  \end{figure}

\subsection{ Simulation Results } 
We apply Algorithm \ref{rec} to the optimal control problem stated previously and compare its performance with Algorithm \ref{sioms} in terms of (1) disturbance rejection and (2) robustness to system parameter uncertainties. For real-time SIOMS execution, both algorithms were now implemented in Python.
\begin{figure*}[t]
\centering
\includegraphics[width=6.53in]{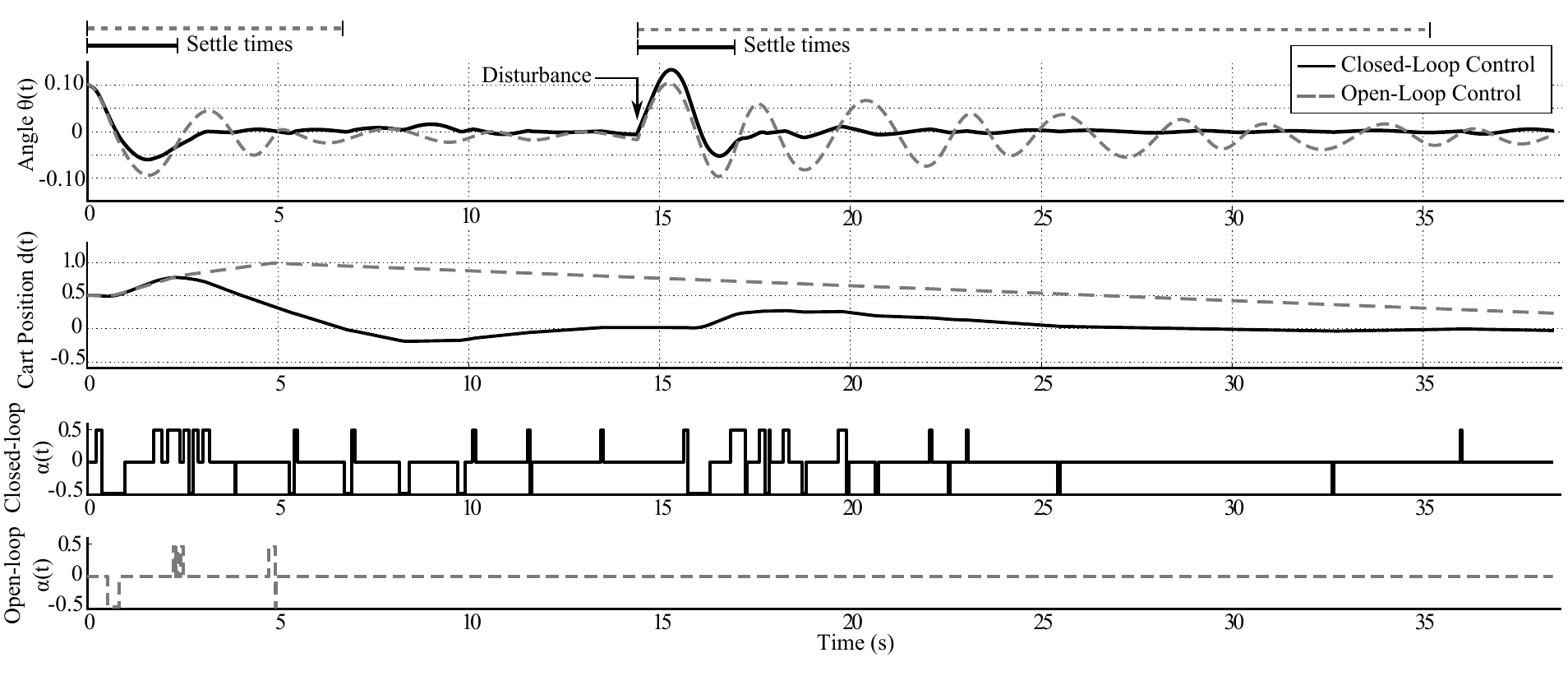} 
      \caption{ Open-loop SIOMS (Algorithm \ref{sioms} with $T_M=40s$) vs Closed-loop SIOMS (Algorithm \ref{rec} with $\delta=0.5$ and $T=3s$) in simulation.}
      \label{recfig}
\end{figure*} 
\paragraph{Disturbance rejection}
We ran Algorithm \ref{rec}  with parameters $\delta=0.5$ and $T=3s$ for a total of 40 seconds.  A disturbance is applied at time $\approx 14s$. Each run of Algorithm \ref{sioms} (i.e.\ 5 SIOMS iterations) lasted on average $0.04s$ of CPU time. Note that the algorithm was implemented in a real-time manner---starting the system simulation/integration from $t=0s$, a new switching control is calculated and applied every $\delta=0.5$ seconds using information about the current system state. For comparison, we additionally ran a one-time open-loop SIOMS (Algorithm \ref{sioms}) with $T_0 =0s$ and $T_M=40s$. The cost is reduced from $J_0\approx 1.96$ to $J\approx 0.58$ after 15 iterations; the optimal switching control was pre-calculated and later applied to the system. 

The results are illustrated in Fig. \ref{recfig}. Starting at an initial value of $0.1rad$, the angle has a settle time\footnote{Settle time is defined here as the time from the arrival of the disturbance until the angle reaches and stays within the settle boundary from $-0.025 rad$ to $0.025 rad$ surrounding the origin.} of about $2.5s$ with closed-loop control compared to $5s$ when open-loop SIOMS is applied. As expected, the disturbance triggers a high angle oscillation with a settle time $>20s$, as the effect is not taken into account by the open-loop controller. The receding-horizon SIOMS, however, results in a much lower settle time of $2.5s$, providing an efficient real-time response to the random disturbance.  
The last 2 diagrams in Fig. \ref{recfig} show the switched cart acceleration $\alpha$ with respect to time as calculated by each algorithm. In close-loop control where the most reliable performance is observed, a total of $65$ switches occur with an average mode duration of $\approx0.42s$ and a minimum mode duration (i.e.\ period during which the mode remain fixed) of $\approx0.02s$ representing actuator frequency limits.

\paragraph{Robustness to model uncertainties}

In a subsequent comparison, we examine the robustness of Algorithm \ref{rec} to model uncertainties and compare its performance to Algorithm \ref{sioms}. In particular, we perform a Monte-Carlo analysis where both algorithms are run 100 times in the following scheme: the optimal switching control is calculated using the system model in \eqref{susp} and is subsequently applied to an equivalent system with randomly added noise in the damping parameter, i.e. $c_{actual}= 0.05 + \omega$ where $\omega$ is a random real number in the range $[-0.05,3.0]$ so that $c_{actual}\in[0,3.05]$. 

We demonstrate the results in Fig. \ref{monte}. The diagrams on the left show the resulting angle trajectories for $t\in [0,6]$ of all algorithm runs. It can be observed that open-loop SIOMS is more sensitive to changes in the damping coefficient  compared to closed-loop SIOMS that exhibits a more robust performance. 
The distribution of optimal costs across all runs  is given in the remaining diagrams of Fig. \ref{monte}. For both open-loop and closed-loop SIOMS, the optimal cost is calculated as in \eqref{cost} over the resulting trajectories $x(t)$ for all $t \in [0,6]$.
The mean optimal cost in open-loop SIOMS is $\approx~0.0037$ compared to $\approx~0.0027$ in the closed-loop implementation. In addition, with receding-horizon SIOMS (Algorithm \ref{rec}) the standard deviation is $0.08 \cdot 10^{-3}$ which is significantly lower than the standard deviation $0.51 \cdot 10^{-3}$ observed in open-loop SIOMS (Algorithm~\ref{sioms}).

\subsection{Experimental Results }

\begin{figure}[t]
\centering
\includegraphics[width=3.3in]{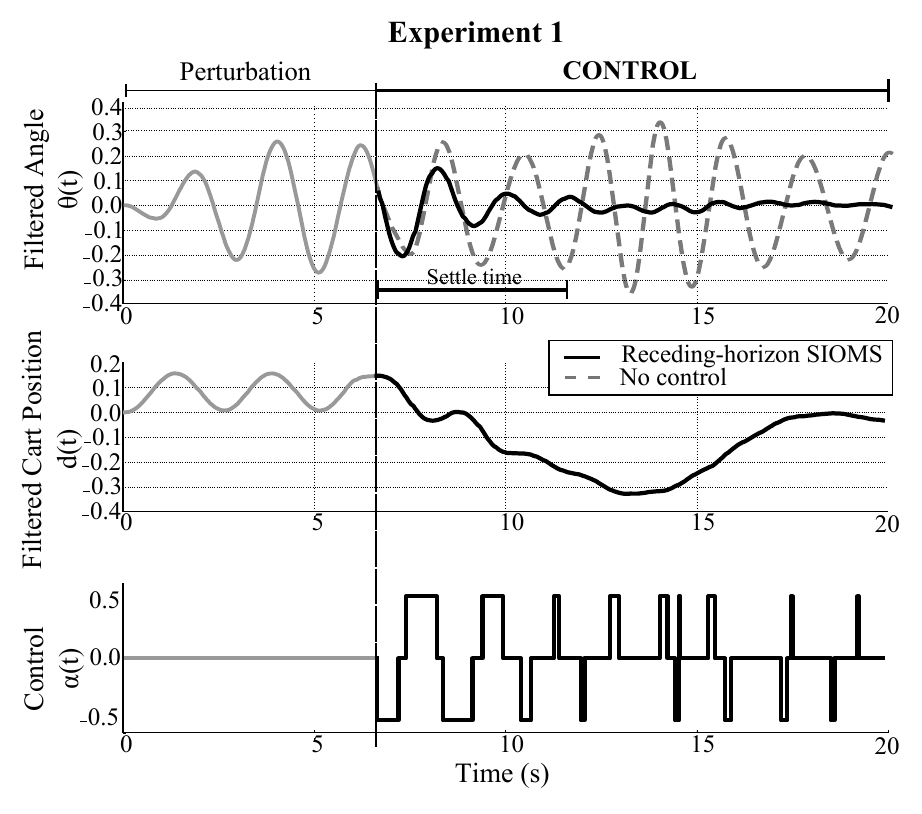} 
      \caption{ An example trial of Experiment 1. First, the robot follows a sinusoidal trajectory perturbing the string angle. Approximately $6.6$ seconds later, receding-horizon SIOMS is applied in real time and drives the angle back to the origin in approximately $4.8$ seconds. Without control, the angle exhibits high oscillations with minimal decay.}
      \label{exp1}
\end{figure}
 
\begin{figure*}[t]
\centering
\includegraphics[width=6.6in]{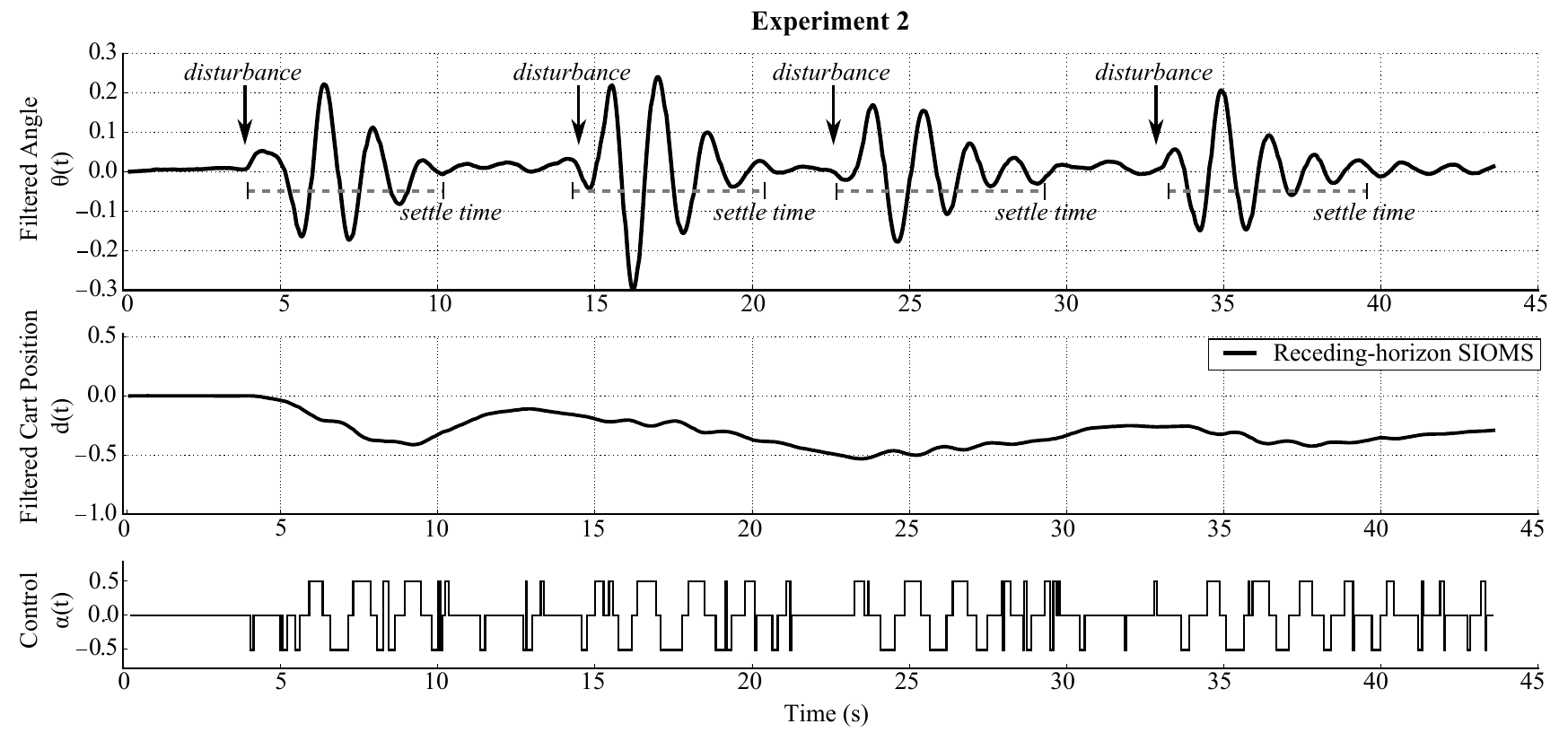} 
      \caption{ An example trial of Experiment 2. SIOMS controller is always active while a person applies random disturbances by pushing the suspended ball four times sequentially. The controller reacts in real time to regulate the angle. Approximate settle time is at $6$ seconds.}
      \label{exp2}
\end{figure*} 

In this paragraph, the performance of the closed-loop hybrid controller (Algorithm~\ref{rec}) is evaluated experimentally on a real cart and suspended mass system (Fig.~\ref{cart}). More information about this experimental platform can be found in \cite{943,956}. Due to geometric constraints and model discrepancies, a few changes in the parameters were made as follows: $h(t)=0.4 sin(t)+1 $ in \eqref{length}, $c=0.001$ in \eqref{susp}, $\delta=0.4$ and $T=5s$ in Algorithm \ref{rec}.   The same objective as in simulation was pursued   i.e.\ real-time angle regulation with the robot position maintained close to the origin. The weight matrices in  \eqref{cost} were set as $Q=diag[0,0,1000,0,0]$ and $P_1=diag[1,0,100,0,0]$. 
We ran 2 sets of experiments to illustrate the features of the hybrid controller based on Algorithm \ref{rec}.  

In Experiment 1, the SIOMS controller is initially inactive and we perturb the string angle by setting a predefined oscillatory trajectory to the cart/robot. After approximately $6.6$ seconds, the controller is activated to optimally drive the angle to zero using Algorithm \ref{rec}. A video of the experiment is available in \cite{sioms1}. One example trial of Experiment 1 is illustrated in Fig. \ref{exp1}. During the perturbation, the angle exhibits an oscillatory response with peak amplitude at $0.25rad$.  Once receding-horizon SIOMS is applied, the string angle starts approaching the origin with a settle time of $4.8$ seconds and the robot moves slightly to the left before returning to the origin. 
For comparison purposes,  Fig. \ref{exp1} also shows the angle trajectory for the case when no control was applied following the perturbation (i.e. $\alpha(t)=0$). One may observe that the uncontrolled system is highly underdamped with no settle time achieved in a time horizon of $\approx 14$ seconds.  Note that the sinusoidal change in peak amplitude and frequency is a result of the time-varying string length.  

\begin{table*}[t]
\caption{We ran 12 trials of Experiment 1 with 4 different perturbation levels. }
\label{exp_table}
\centering
\resizebox{6.0in}{!}{
\begin{tabular}{c|c|c|c||c|c|c||c|c|c||c|c|c|}
\cline{2-13}
&  \multicolumn{3}{ c|| }{\textbf{Perturbation 1}} &
\multicolumn{3}{ c|| }{\textbf{Perturbation 2}} &
\multicolumn{3}{ c|| }{\textbf{Perturbation 3}} &
\multicolumn{3}{ c| }{\textbf{Perturbation 4}}  \\ 
&  \multicolumn{3}{ c|| }{peak angle = $0.18rad$} &
\multicolumn{3}{ c|| }{peak angle = $0.25rad$} &
\multicolumn{3}{ c|| }{peak angle = $0.33rad$} &
\multicolumn{3}{ c| }{peak angle = $0.4rad$}  \\ \cline{2-13}
\multicolumn{1}{ r| } {\textit{Trial}} & \textit{1} & \textit{2} & \textit{3} & \textit{1} & \textit{2} & \textit{3} & \textit{1} & \textit{2} & \textit{3} & \textit{1} & \textit{2} & \textit{3} \\ \cline{1-13}
\multicolumn{1}{ |c| } {\textbf{switches / second}} & 2.80 & 3.20 & 2.60 & 2.36 & 2.61 & 2.80 & 3.88 & 2.77 & 2.46 & 2.89 & 2.59 & 2.66 \\ \cline{1-13}
\multicolumn{1}{ |c| } {\textbf{average mode duration (s)}} & 0.34 & 0.27 & 0.32 & 0.31 & 0.35 & 0.32 & 0.38 & 0.33 & 0.35 & 0.31 & 0.32 & 0.34 \\ \cline{1-13}
\multicolumn{1}{ |c| } {\textbf{settle time (s)}} & 2.9 & 3.6 & 4.2 & 3.4 & 4.2 & 4.8 & 7.2 & 7.5 & 6.9 & 9.4 & 8.6 & 9.1 \\ \cline{1-13}
\end{tabular}
}
\end{table*}
 
We repeated Experiment 1 for four different perturbation levels, each characterized by the peak angle amplitude achieved. Three trials per perturbation were run i.e.\ twelve trials in total. As performance metrics, we used a) the number of switches per second, b) the average mode duration and c) the settle time for the string angle.  Our goal was to verify the reliability and efficacy of the controller in noisy conditions induced by sensor and model deficiencies. The results are given in Table \ref{exp_table}. Throughout the trials, the number of switches per second ranges from $2.3$ to $3.8$. The average mode duration also exhibits low variation among different trials with a range from $0.27$ to $0.38$ seconds. As expected, settle times increase with higher perturbation levels but remain fairly close among trials of the same perturbation. 

In Experiment 2, we sought to evaluate the performance of the hybrid controller when random disturbances occur in real time. To achieve this, the experiment is initialized at $y=0$, $\zeta=0$, $\alpha=0$ and zero velocities. With the receding-horizon SIOMS controller activated, a person pushes the ball (i.e.\ suspended mass) towards one direction to create real-time disturbances. The controller responds to the disturbance to regulate the angle and drive it back to zero. A video of the experiment is available in \cite{sioms2}. An example trial of Experiment 2 is presented in Fig. \ref{exp2} where four consecutive disturbances of varied amplitudes are applied.  One may observe that the controller regulates the angle  with settle times of approximately $6$ seconds in all four cases. Furthermore, as a result of the terminal cost applied at $y(t)$, the robot does not deviate significantly from the origin and always returns close to zero once the angle is near zero.

\section{ Conclusions and Future Work }

Our objective in this paper is to achieve fast and consistent, real-time hybrid control by taking advantage of linearity of a switched system. In general, mode scheduling is challenging due to the fact that both the mode sequence and the set of switching times must be optimized jointly. Thus, execution time of an optimization is often prohibitive for real-time applications and can only be reduced at the expense of approximation accuracy. In addition, the numerical implementation of optimal mode scheduling algorithms requires consistent solution approximations that are prone to numerical errors due to discontinuities of the switched system under concern.  

We addressed these issues by introducing an algorithm (SIOMS) for scheduling the modes of linear time-varying switched systems subject to a quadratic cost functional. By solving a single set of differential equations off-line, open-loop SIOMS requires no online simulations while closed-loop SIOMS only involves an integration over a limited time interval rather than the full time horizon. The proposed algorithm is fast and free of the trade-off between execution time and approximation errors. Furthermore, in practical implementation, the proposed solution of the state and adjoint equations is independent of the selected step size. For this reason, approximation accuracy and consistency in SIOMS does not depend on the number of samples used for approximation of the state and co-state trajectories. We verified the aforementioned advantages using a numerical example and comparing SIOMS to algorithms that include common integration schemes. Finally, to verify the efficacy of receding-horizon SIOMS in real-world applications, we performed a real-time experiment using ROS. Our experimental work demonstrated that a cart and suspended mass system can be regulated in real time using closed-loop hybrid control signals.

Future work will focus on formally establishing stability of the receding-horizon SIOMS controller. Stability criteria for model predictive hybrid control may rely on hysteresis and dwell-time conditions (see \cite{mayne2014model, magni2001output, muller2012improving,sanfelice2014input,HSCC10}).

\section*{References}

\bibliographystyle{IEEEtran}
\bibliography{references}

\end{document}